\documentclass[12pt]{amsart}
\usepackage{amsthm,amsmath,amssymb,enumerate}
\newtheorem{thm}{Theorem}[section]
\newtheorem{prop}[thm]{Proposition}
\newtheorem{lemm}[thm]{Lemma}
\newtheorem{coro}[thm]{Corollary}
\theoremstyle{definition}
\newtheorem*{rema}{Remark}

\newcommand{\Z}{\mathbb{Z}}
\newcommand{\Aut}{\operatorname{Aut}}
\newcommand{\Hol}{\operatorname{Hol}}
\newcommand{\ci}{\circ}

\begin{document}
\title[Classification of skew left braces]
{Classification of skew left braces with additive group
  isomorphic to the infinite dihedral group}

\author[A. Hanaki]{Akihide Hanaki}
\address{Faculty of Science, Shinshu University,
  Matsumoto 3-1-1, 390-8621, Japan}
\email{hanaki@shinshu-u.ac.jp}

\author[Y. Sakata]{Yuto Sakata}
\address{1-6-35 Miyanomori, Muroran,
  Hokkaido, 050-0073, Japan}
\email{yuto.sakata.math@gmail.com}

\author[H. Yoshino]{Hiroki Yoshino}
\address{2642 Okitsu, Katsuura, Chiba 299-5245, Japan}
\email{yoshino.math@gmail.com}

\begin{abstract}
  We classify all skew left braces with additive group
  isomorphic to the infinite dihedral group.
  There are ten isomorphism classes.
\end{abstract}

\maketitle

\section{Introduction}
Let $A$ be a set with two binary operations
$(a,b)\mapsto a\cdot b$ and $(a,b)\mapsto a \ci b$.
The triple $(A,\cdot,\ci)$ is called a \emph{skew left brace} if
$(A,\cdot)$ and $(A,\ci)$ are groups and
$$a\ci(b\cdot c)=(a\ci b)\cdot a^{-1} \cdot (a\ci c)$$
holds for all $a,b,c\in A$,
where $a^{-1}$ is the inverse element of $a$
with respect to the operation of $(A,\cdot)$.
The groups $(A,\cdot)$ and $(A,\ci)$ are called the \emph{additive group}
and the \emph{multiplicative group} of the skew left brace, respectively.
We remark that the additive group is not necessarily an abelian group.
(If $(A,\cdot)$ is abelian, then $(A,\cdot,\ci)$ is called a \emph{left brace}.)
We often abbreviate $a\cdot b$ as $ab$.
The identity element of $(A,\cdot)$ is also the identity element of $(A,\ci)$.

Let $G$ be a group.
It is natural to ask how many skew left braces with additive (multiplicative) group $G$ exist.
In this direction, there are many results for finite groups $G$
\cite{MR4414157, MR4391827, MR4815260, MR4678589, MR4113853,
  MR4513787, MR4950651, MR3647970}.
For the infinite cyclic group $G$ as an additive group,
it is known that there are exactly two isomorphism
classes of skew left braces \cite[Proof of Proposition 6]{MR2298848}.
In this paper, we determine all isomorphism classes of skew left braces
with additive group isomorphic to the infinite dihedral group $D_\infty$.
There are ten isomorphism classes of such skew left braces
(Theorem \ref{cls_slb}).
This answers a problem raised by Vendramin
\cite[after Problem 2.27]{MR3974481}.

It is known that skew left braces with additive group $G$ correspond to
regular subgroups of the holomorph of $G$, and they correspond to
maps $\gamma:A\to \Aut(A,\cdot)$ with some property
\cite[Theorem 2.2]{MR4089566}, \cite[Theorem 4.2]{MR3647970}.
We will consider these maps to classify skew left braces.
Our method is very simple,
but it requires a lot of case analysis and calculations.

\section{Preliminaries}
\subsection{Holomorphs and $\gamma$-maps}
Let $G$ be a group, and consider the automorphism group $\Aut(G)$.
The semidirect product of $G$ and $\Aut(G)$ is called
the \emph{holomorph} of $G$, and denoted by $\Hol(G)$.
Namely,
$$\Hol(G)=G\rtimes \Aut(G)=\{(a,f)\mid a\in G,\ f\in \Aut(G)\},$$
and the product is defined by
$$(a,f)(b,g)=(af(b), fg).$$
The holomorph acts on $G$ from the left by
$(a,f)x=af(x)$ for $(a,f)\in \Hol(G)$ and $x\in G$.

Let $G$ be a group.
A map $\gamma:G\to\Aut(G)$ is called a \emph{$\gamma$-map} on $G$ if
$$\gamma(a\gamma(a)(b))=\gamma(a)\gamma(b)$$
holds for all $a, b\in G$.
The automorphism group $\Aut(G)$ acts on the set of all $\gamma$-maps by
$$\gamma^\varphi(a)=\varphi^{-1}\gamma(\varphi(a))\varphi\quad
(a\in G,\ \varphi\in\Aut(G)).$$
We define an equivalence relation $\sim$ by the above action.

Now we are ready to give a correspondence between skew left braces
with additive group $G$ and some other objects
\cite[Theorem 2.2]{MR4089566}, \cite[Theorem 4.2]{MR3647970}.

\begin{prop}\label{prop2.1}
  Let $G$ be a group.
  There are bijections between the following sets:
  \begin{enumerate}[{\rm (1)}]
    \item The set of isomorphism classes of skew left braces
    with additive group $G$.
    \item The set of conjugacy classes of regular subgroups of $\Hol(G)$.
    \item The $\sim$ equivalence classes of $\gamma$-maps on $G$. 
  \end{enumerate}
\end{prop}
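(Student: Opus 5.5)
I will construct explicit maps between the three sets and check that each is well defined on classes and inverts the next. The unifying object is the $\gamma$-map. Given a skew left brace $(G,\cdot,\ci)$, set $\gamma(a)(b)=a^{-1}(a\ci b)$.

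\textbf{Step 1: brace to $\gamma$-map.}
\begin{enumerate}[{\rm (a)}]
\item The brace identity $a\ci(bc)=(a\ci b)a^{-1}(a\ci c)$ says exactly that $\gamma(a)(bc)=\gamma(a)(b)\gamma(a)(c)$. Bijectivity of $b\mapsto a\ci b$ then gives $\gamma(a)\in\Aut(G)$.
\item Associativity of $\ci$, rewritten using $a\ci b=a\gamma(a)(b)$, becomes $\gamma(a\ci b)=\gamma(a)\gamma(b)$, which is the defining identity $\gamma(a\gamma(a)(b))=\gamma(a)\gamma(b)$.
\item Conversely, given a $\gamma$-map, define $a\ci b=a\gamma(a)(b)$. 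Associativity follows from the $\gamma$-identity, and the identity of $(G,\cdot)$ is neutral for $\ci$ because $\gamma(1)=\gamma(1)^2$ forces $\gamma(1)=\mathrm{id}$.
\item Inverses for $\ci$: the right inverse of $a$ is $\gamma(a)^{-1}(a^{-1})$. A monoid in which every element has a right inverse is a group, so $(G,\ci)$ is a group.
\item The brace identity then holds since each $\gamma(a)$ is an automorphism.
\end{enumerate}

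\textbf{Step 2: isomorphisms correspond to $\sim$.} A bijection $\varphi$ that is an isomorphism of $\cdot$ lies in $\Aut(G)$. Such a $\varphi$ is a brace isomorphism from the brace of $\gamma'$ to the brace of $\gamma$ iff $\varphi(a\gamma'(a)(b))=\varphi(a)\gamma(\varphi(a))(\varphi(b))$. This is equivalent to $\gamma'(a)=\varphi^{-1}\gamma(\varphi(a))\varphi$, i.e.\ $\gamma'=\gamma^\varphi$. Hence (1) and (3) are in bijection.

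\textbf{Step 3: regular subgroups.}
\begin{enumerate}[{\rm (a)}]
\item To a $\gamma$-map associate $H_\gamma=\{(a,\gamma(a))\mid a\in G\}\subseteq\Hol(G)$. The product rule gives $(a,\gamma(a))(b,\gamma(b))=(a\gamma(a)(b),\gamma(a)\gamma(b))$, and this lies in $H_\gamma$ precisely by the $\gamma$-identity. Thus $a\mapsto(a,\gamma(a))$ is an isomorphism $(G,\ci)\to H_\gamma$, and $H_\gamma$ is a subgroup.
\item $H_\gamma$ acts regularly, since $(a,\gamma(a))\cdot1=a$.
\item Conversely, let $H$ be regular. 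Since $h\mapsto h\cdot1$ is bijective, each $a\in G$ occurs as the first coordinate of exactly one element $(a,\gamma(a))\in H$. Closure of $H$ under multiplication then yields the $\gamma$-identity.
\item Conjugation: $(1,\varphi)^{-1}(a,\gamma(a))(1,\varphi)=(\varphi^{-1}(a),\varphi^{-1}\gamma(a)\varphi)$. Reindexing by $a=\varphi(c)$, this shows $(1,\varphi)^{-1}H_\gamma(1,\varphi)=H_{\gamma^\varphi}$.
\end{enumerate}

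The main obstacle is the last step, because conjugacy in (2) must be taken by all of $\Hol(G)$, not just $\Aut(G)$. Write a general element as $(c,f)=(c,1)(1,f)$. Then $(c,1)^{-1}H(c,1)$ is again regular, with first coordinates $c^{-1}a\gamma(a)(c)$. I would show that $(c,1)$ can be absorbed: choose $h=(b,\gamma(b))\in H$ with $h\cdot1=b=c$ via regularity. Then $(c,1)=h(1,\gamma(c)^{-1})$, so conjugating by $(c,1)$ equals conjugating by $(1,\gamma(c)^{-1})$ after conjugating by $h\in H$, which fixes $H$. Hence conjugacy classes under $\Hol(G)$ equal orbits under $\Aut(G)$, which by Step 3(d) are the $\sim$-classes. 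This completes the bijection between (2) and (3).
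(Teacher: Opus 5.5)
Your proof is correct and complete. The paper gives no argument of its own for Proposition~\ref{prop2.1}; it simply cites the two references, so your direct verification is a genuinely self-contained alternative.

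You organize everything around the dictionary $\gamma(a)(b)=a^{-1}(a\ci b)$:
\begin{itemize}
\item the brace identity becomes multiplicativity of $\gamma(a)$;
\item associativity of $\ci$ becomes the $\gamma$-identity;
\item brace isomorphisms become the $\Aut(G)$-action defining $\sim$;
\item regular subgroups become graphs $H_\gamma=\{(a,\gamma(a))\}$.
\end{itemize}

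The step that adds something beyond routine checking is your last paragraph. The regular-subgroup correspondence is often stated up to conjugation by $\Aut(G)$ only, whereas item (2) speaks of conjugacy in $\Hol(G)$. Your factorization $(c,\mathrm{id})=(c,\gamma(c))\,(1,\gamma(c)^{-1})$, i.e.\ $\Hol(G)=H\,\Aut(G)$ for every regular $H$, shows that the two notions of conjugacy class coincide. So the statement as phrased is justified.

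The citation buys the paper brevity at essentially no cost, because it only ever uses (1)$\leftrightarrow$(3): the classification is carried out on $\gamma$-maps and translated back via $a\ci b=a\gamma(a)(b)$. Your argument buys independence from the literature and removes any ambiguity about which conjugacy is meant in (2).

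You leave two points implicit, both routine: that the two constructions in Step~1 are mutually inverse, and that a brace whose additive group is merely isomorphic to $G$ can be transported onto $G$ itself.
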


For any $\gamma$-map $\gamma:G\to \Aut(G,\cdot)$,
the binary operation
$$a\circ b = a \cdot \gamma(a)(b) \qquad (a,b\in G)$$
makes $(G,\cdot,\circ)$ into a skew left brace.

\subsection{The infinite dihedral group $D_\infty$}
The infinite dihedral group is
$$D_\infty=\langle x,y\mid y^2=1,\ yx=x^{-1}y\rangle.$$
We denote $x^ay^s$ by $(a,s)$, and so
$$D_\infty=\{(a,s)\mid a\in \Z,\ s\in \Z_2\},$$
where $\Z_2=\Z/2\Z$, with operation
$$(a,\ s)(b,\ t)=(a+(-1)^s b,\ s+t).$$

For the automorphism group of $D_\infty$,
we have the following proposition.

\begin{prop}\label{prop2.2}
  We have
  $\Aut(D_\infty)=\{[b,t]\mid b\in \Z,\ t\in\Z_2\}$,
  where $$[b,t](a,s)=((-1)^ta+bs,s).$$
  The following equations hold.
  \begin{enumerate}[{\rm (1)}]
    \item $[a,s][b,t] = [a+(-1)^s b, s+t]$,
    \item $[a,s]^{-1}=[(-1)^{s+1}a,s]$,
    \item $[b,t]^{-1}[a,s][b,t]=[(-1)^ta+(-1)^{t+1}b+(-1)^{s+t}b,s]$.
  \end{enumerate}
\end{prop}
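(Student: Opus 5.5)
To prove Proposition~\ref{prop2.2}, I will first check that each $[b,t]$ is an automorphism of $D_\infty$, then show that every automorphism has this form, and finally verify the three identities by direct computation with the coordinates $(a,s)$.

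\emph{Step 1: each $[b,t]$ is an automorphism.} Write $\phi=[b,t]$. I check the homomorphism property on coordinates. On one side,
$$\phi((a,s)(c,u))=\phi(a+(-1)^sc,\,s+u)=\bigl((-1)^ta+(-1)^{t+s}c+b(s+u),\ s+u\bigr).$$
On the other side,
$$\phi(a,s)\phi(c,u)=\bigl((-1)^ta+bs+(-1)^s((-1)^tc+bu),\ s+u\bigr).$$
Here I must be careful that $s,u\in\Z_2$ while $bs$ is read using the representatives $0,1$. The two first coordinates agree unless $s=u=1$. In that case one side has $b\cdot 0=0$ and the other has $b-b=0$, so they agree as well. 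Bijectivity follows once (2) is established, since (2) exhibits an inverse.

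\emph{Step 2: every automorphism has this form.} The elements of infinite order are exactly the $(a,0)$ with $a\neq0$. Indeed, $(a,1)^2=(0,0)$, so every $(a,1)$ has order at most $2$. Hence an automorphism $\phi$ preserves the subgroup $\langle x\rangle=\{(a,0)\}$ and restricts to an automorphism of $\Z$. So $\phi(x)=x^{\pm1}$, and we write $\phi(1,0)=((-1)^t,0)$. Also $\phi(y)$ lies outside $\langle x\rangle$, so $\phi(0,1)=(b,1)$ for some $b$. Since $(a,s)=(a,0)(0,s)$, we get $\phi(a,s)=((-1)^ta,0)(bs,s)=((-1)^ta+bs,s)$, which is exactly $[b,t]$. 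Distinct pairs $(b,t)$ give distinct maps, as one sees by evaluating at $(1,0)$ and $(0,1)$.

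\emph{Step 3: the identities.} For (1), I compute $[a,s]([b,t](c,u))=[a,s]((-1)^tc+bu,u)=((-1)^{s+t}c+(-1)^sbu+au,\,u)$. This equals $[a+(-1)^sb,\,s+t](c,u)$. Identity (2) follows from (1): we have $[a,s][(-1)^{s+1}a,s]=[a-a,0]=[0,0]$, which is the identity map. For (3), I apply (1) twice, using (2) for $[b,t]^{-1}=[(-1)^{t+1}b,t]$. This gives $[(-1)^{t+1}b+(-1)^ta,\,s+t][b,t]=[(-1)^{t+1}b+(-1)^ta+(-1)^{s+t}b,\,s]$.

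The only real obstacle is the bookkeeping with the mixed $\Z$/$\Z_2$ exponents, in particular interpreting $bs$ via the representatives of $s$ as handled in Step 1. Everything else is routine.
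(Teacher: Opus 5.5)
Your proof is correct and complete. The paper itself gives no proof of this proposition; it states the description of $\Aut(D_\infty)$ and the identities (1)--(3) as standard facts and uses them directly. Your argument supplies that omitted verification in the natural way:
\begin{itemize}
\item you check the homomorphism property coordinatewise, handling the $s=u=1$ case via the reduction $s+u=0$ in $\Z_2$;
\item you use element orders to see that $\{(a,0)\}$ is preserved, so that $\phi(1,0)=((-1)^t,0)$ and $\phi(0,1)=(b,1)$;
\item you obtain (1)--(3) by composing the coordinate formulas.
\end{itemize}

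One small point should be tightened. In Step~1 you derive bijectivity from (2), but you only verify $[a,s][(-1)^{s+1}a,s]=[0,0]$, which is a one-sided inverse. The other order follows from the same identity with $a$ replaced by $(-1)^{s+1}a$, since $(-1)^{s+1}(-1)^{s+1}a=a$. So $[(-1)^{s+1}a,s]$ is a two-sided inverse and each $[b,t]$ is indeed a bijection. There is no circularity here, because (1) and (2) are identities about composition of maps and do not presuppose that the $[b,t]$ are automorphisms.
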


We remark that $s\in \Z_2$ can appear in
the first entry of $[b,t](a,s)=((-1)^ta+bs,s)$.
The elements of $\Z_2=\{0,1\}$ are naturally embedded into $\Z$. 
We will use the similar notation throughout this paper.

\section{Main results -- skew left braces with additive group $D_\infty$}
We classify all skew left braces with additive group $D_\infty$.
By Proposition \ref{prop2.1},
we classify equivalence classes of $\gamma$-maps on $D_\infty$.
We determine all $\gamma$-maps, and then we consider their equivalence.
Consequently we will get ten isomorphism classes of such skew left braces.
The following theorem is the main result of this paper.

\begin{thm}\label{cls_slb}
  Let $(A,\cdot,\ci)$ be a skew left brace with $(A,\cdot)=D_\infty$.
  Then $(A,\cdot,\ci)$ is isomorphic to a skew left brace
  defined by one of the following multiplications.
  Conversely, the following ten operations define
  non-isomorphic skew left braces.
  \begin{itemize}
    \item[(1)] $(a,s)\circ(b,t)=(a,s)(b,t)=(a+(-1)^sb,s+t)$
    (the trivial skew brace)
    \item[(2,0)] $(a,s)\circ(b,t)=(a+(-1)^sb-2at,s+t)$
    \item[(2,1)] $(a,s)\circ(b,t)=(a+(-1)^sb-2at+(-1)^s st,s+t)$
    \item[(3)] $(a,s)\circ(b,t)=(a+b-2at,s+t)$
    \item[(4,0)] $(a,s)\circ(b,t)=(a+b,s+t)$
    \item[(4,1)] $(a,s)\circ(b,t)=(a+b+(-1)^s st,s+t)$
    \item[(5,0)] $(a,s)\circ(b,t)=\begin{cases}
      (a+(-1)^sb,s+t), & \text{if $a$ is even},\\
      (a+(-1)^{s+1}b,s+t), & \text{if $a$ is odd}.
    \end{cases}$
    \item[(5,1)] $(a,s)\circ(b,t)=\begin{cases}
      (a+(-1)^sb,s+t), & \text{if $a$ is even},\\
      (a+(-1)^{s+1}b+(-1)^s2t,s+t), & \text{if $a$ is odd}.
    \end{cases}$
    \item[(6,0)] $(a,s)\circ(b,t)=\begin{cases}
      (a+b-2at,s+t), & \text{if $a+s$ is even},\\
      (a-b-2at+2t,s+t), & \text{if $a+s$ is odd}.
    \end{cases}$
    \item[(6,1)] $(a,s)\circ(b,t)=\begin{cases}
      (a-2at+(-1)^s b + 4st,s+t), & \text{if $a$ is even},\\
      (a-2(a-1)t+(-1)^{s+1}b+(-1)^s2t,s+t), & \text{if $a$ is odd}.
    \end{cases}$
  \end{itemize}
\end{thm}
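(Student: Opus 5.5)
By Proposition \ref{prop2.1}, the task is to determine all $\gamma$-maps $\gamma:D_\infty\to\Aut(D_\infty)$ and then sort them into $\sim$-classes. I will write $\gamma(a,s)=[f(a,s),g(a,s)]$ with $f:D_\infty\to\Z$ and $g:D_\infty\to\Z_2$, and translate the $\gamma$-condition into equations on $f,g$ using Proposition \ref{prop2.2}.

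The first step is to control $g$. Composing $\gamma$ with the projection $\Aut(D_\infty)\to\Z_2$, $[b,t]\mapsto t$, gives $g(a\gamma(a)(b))=g(a)+g(b)$. This says exactly that $g$ is a homomorphism from the multiplicative group $(D_\infty,\circ)$ to $\Z_2$. I will combine this with the explicit form
\[
(a,s)\circ(b,t)=\bigl(a+(-1)^s((-1)^{g(a,s)}b+f(a,s)t),\,s+t\bigr).
\]
Two evaluations give strong constraints.
\begin{itemize}
\item On elements $(b,0)$ the $\circ$-product is affine in $b$ with slope $\pm1$. Restricting to $s=t=0$, the map $(a,0)\mapsto(a,0)$ behaves like a subgroup $N=\Z\times\{0\}$ on which $a\circ b=a+(-1)^{g(a,0)}b$.
\item I expect this to force $g(\cdot,0)$ to be either constant $0$ or the parity of $a$. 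If $g(1,0)=1$, then the homomorphism property forces $g(a,0)\equiv a \pmod 2$.
\end{itemize}
Next I will determine $g(a,1)$ from $(a,1)=(a,0)\circ(\ast,1)$-type relations. This should leave a handful of possibilities for $g$: $0$, $s$, $a$, $a+s$, and perhaps a few mixed ones. These roughly match the families (1)--(6) in Theorem \ref{cls_slb}.

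For each possible $g$, the $\Z$-component of the $\gamma$-equation becomes a functional equation for $f$:
\[
f\bigl((a,s)\circ(b,t)\bigr)=f(a,s)+(-1)^{g(a,s)}f(b,t).
\]
I will solve it by first setting $t=0$, which gives a recursion along $\Z$ determining $f(\cdot,0)$ from $f(1,0)$. Then I will set $t=1$ to get $f(\cdot,1)$ from $f(0,1)$. I expect $f$ to be affine in $a$ on each coset, or on each parity class when $g$ depends on $a$. Consistency will pin down the slopes, for instance forcing $f(a,0)=-2a$ or $0$, and leave free integer parameters such as $f(0,1)=c$.

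The final step is to act by $\varphi=[b,t]\in\Aut(D_\infty)$ using Proposition \ref{prop2.2}(3). I expect conjugation to shift the parameter $c$ by even integers, so only $c\bmod 2$ survives; this produces the pairs $(k,0),(k,1)$. Non-isomorphism across families should follow from invariants:
\begin{itemize}
\item the isomorphism type of $(D_\infty,\circ)$, which is $D_\infty$, $\Z\times\Z_2$, or $\Z$-like;
\item the kernel of $\gamma$;
\item the $g$-type, which is conjugation invariant because $t$ is central modulo conjugation.
\end{itemize}
The main obstacle is the case analysis for $f$ when $g$ depends on the parity of $a$, as in families (5) and (6). There the recursion splits by parity and the consistency checks are messy. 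I would handle it by computing $\gamma$ on the generators $(1,0)$ and $(0,1)$ of the $\circ$-group and extending, rather than solving the equation pointwise.
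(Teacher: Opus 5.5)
The step that breaks is your claim that the $g$-type is invariant under conjugation. Since $[b,t]\mapsto t$ is a homomorphism to the abelian group $\Z_2$, the second component of $\gamma^\varphi(x)=\varphi^{-1}\gamma(\varphi(x))\varphi$ is $g(\varphi(x))$. So $g^\varphi=g\circ\varphi$, not $g$. For $\varphi=[b,t]$ we have $\varphi(a,s)=((-1)^ta+bs,s)$, so an odd $b$ turns $g(a,s)=a$ into $g^\varphi(a,s)=a+s$. Hence the $\gamma$-maps with $g=a+s$ (the paper's $\gamma_{(7,k)}$ and $\gamma_{(8,k)}$) are not new classes. Conjugation by $[b,t]$ with $b$ odd carries them onto the $g=a$ families, flipping the parity of the parameter: $\gamma_{(5,0)}\sim\gamma_{(7,1)}$, $\gamma_{(5,1)}\sim\gamma_{(7,0)}$, $\gamma_{(6,0)}\sim\gamma_{(8,1)}$, $\gamma_{(6,1)}\sim\gamma_{(8,0)}$. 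This is exactly why the theorem lists no operation with $g=a+s$. With your invariant you would keep these apart and get $14$ classes instead of $10$. The correct invariant is the orbit of $g$ under precomposition by $\Aut(D_\infty)$, which is $\{0\}$, $\{s\}$ or $\{a,a+s\}$.

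Your plan for the parity-dependent families also fails. The elements $(1,0)$ and $(0,1)$ need not generate $(D_\infty,\circ)$, and $\gamma$ is not determined by its values there. For example, $\gamma_{(5,-1)}$ and $\gamma_{(6,-1)}$ both send $(1,0)\mapsto[-2,1]$ and $(0,1)\mapsto[0,0]$, but send $(2,0)$ to $[0,0]$ and $[-4,0]$ respectively. Correspondingly, for (5,1) and (6,1) the multiplicative group is $D_\infty\times\Z_2$, which needs three generators (its abelianization is $\Z_2^3$). This group is missing from your list of possible $\circ$-groups. Among your proposed invariants it is the only one that separates (5,0) from (5,1), since these have the same kernel $\{(2\ell,s)\}$ and the same $g$-orbit.

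When $g(1,0)=1$, the $t=0$ recursion only gives $f(2\ell,0)=\ell f(2,0)$ and $f(2\ell+1,0)=\ell f(2,0)+f(1,0)$. So $f(2,0)$ must be treated as a separate unknown. You need to show that $f$ takes only even values, then use $(a,s)=(2,0)$, $(b,t)=(0,1)$ to get $f(2,0)\in\{0,-4\}$, and only then solve for $f(0,1)$ in each subcase. This is why the paper tracks $\gamma$ at the three points $(1,0)$, $(0,1)$, $(2,0)$. In these families the free parameter is $f(1,0)=2k$, and what survives conjugation is $k\bmod 2$. A ``parameter mod $2$'' read off from $f(0,1)$, which is always even there, would lose the splits (5,0)/(5,1) and (6,0)/(6,1).
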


Theorem \ref{cls_slb} will be proved by applying $a\ci b=a\gamma(a)(b)$ to
the classification of $\gamma$-maps (Theorem \ref{cls_gamma}).
For the structure of the multiplicative groups $(A,\ci)$,
we have the following corollary.

\begin{coro}
  The multiplicative groups $(A,\ci)$ of the skew left braces
  $(A,\cdot,\ci)$ in Theorem \ref{cls_slb} are isomorphic to 
  \begin{itemize}
    \item $\Z$ for (2,1), (4,1),
    \item $\Z\times \Z_2$ for (2,0), (4,0), 
    \item $D_\infty=\Z\rtimes \Z_2$ for (1), (3), (5,0), (6,0).
    \item $D_\infty\times \Z_2=(\Z\rtimes \Z_2)\times \Z_2$ for (5,1), (6,1).
  \end{itemize}
\end{coro}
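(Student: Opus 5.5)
The plan is to treat the ten operations of Theorem \ref{cls_slb} one at a time, and for each one write down explicit generators of $(A,\ci)$ and an explicit bijection with the claimed group. The four target groups are told apart by standard invariants: abelian or not, and torsion-free or not. $\Z$ is abelian and torsion-free. $\Z\times\Z_2$ is abelian with torsion. $D_\infty$ is nonabelian with trivial center. $D_\infty\times\Z_2$ is nonabelian with center $\Z_2$. So for each case it is enough to produce the isomorphism; no case can then land in two classes. Throughout I use that $(0,0)$ is the identity of $(A,\ci)$, and that the $\ci$-inverse is $a^{-1}$ computed as $\gamma(a)^{-1}(a^{-1})$.

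\textbf{The abelian cases.} First I check which operations are commutative. In (4,0) the operation is literally $(a+b,s+t)$, so $(A,\ci)\cong\Z\times\Z_2$. In (4,1) I compute powers of $(0,1)$. We have $(0,1)\ci(0,1)=(-1,0)$, and elements $(a,0)$ multiply as $(a,0)\ci(b,t)=(a+b,t)$. By induction, $(0,1)^{\ci 2k}=(-k,0)$ and $(0,1)^{\ci(2k+1)}=(-k,1)$. These exhaust $A$, so $(A,\ci)$ is infinite cyclic. For (2,0) and (2,1), I expect the map $(a,s)\mapsto((-1)^{?}a,s)$, with a twist depending on parity, to conjugate the operation to the form of (4,0) or (4,1) respectively. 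If that is awkward, I will instead compute directly: commutativity is a short check, then I exhibit the element $(0,1)$ and find its order. In (2,0), $(0,1)\ci(0,1)=(0,0)$, so there is an involution and $(A,\ci)\cong\Z\times\Z_2$ with $\Z$ generated by $(1,0)$. In (2,1), $(0,1)$ has infinite order and generates, exactly as in (4,1).

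\textbf{The $D_\infty$ cases.} Case (1) is trivial. For (3), (5,0) and (6,0), I find an element $u$ of infinite order and an involution $v$ with $v\ci u\ci v=u^{-1}$, and check that every element is $u^k$ or $u^k\ci v$. For (3) take $u=(1,0)$ and $v=(0,1)$. For (5,0), the odd $a$ reverse sign, so I try $u=(2,0)$ or $u=(1,1)$ and locate a suitable involution by solving $w\ci w=(0,0)$. For (6,0) I proceed similarly. Normal forms give the bijection.

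\textbf{The $D_\infty\times\Z_2$ cases, and the main obstacle.} For (5,1) and (6,1) the strategy is to find a central involution $z$, show $\langle z\rangle$ has a complement isomorphic to $D_\infty$, and exhibit generators. The difficult step is locating $z$ and the complement, since the parity-dependent formulas make the centre non-obvious. I would first solve $w\ci w=(0,0)$ among small elements, then test which solutions commute with everything; $(1,0)$- or $(0,1)$-type elements are the first candidates. Once $z$ and a $D_\infty$ generating pair $u,v$ commuting with $z$ are found, the normal form $u^kv^\epsilon z^\delta$ reduces the remaining work to routine parity checks.
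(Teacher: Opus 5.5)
Your strategy is the paper's: for each operation, exhibit explicit generators of $(A,\ci)$ that satisfy the relations of the target group, and check a normal form. Everything you actually compute is correct and matches the paper's assignments:
$(0,1)$ generates an infinite cyclic group in (2,1) and (4,1); in (2,1) one gets $(0,1)^{\ci 2k}=(-k,0)$ and $(0,1)^{\ci(2k+1)}=(k,1)$. The elements $(1,0)$ and $(0,1)$ give $\Z\times\Z_2$ in (2,0) and (4,0), and $D_\infty$ in (1) and (3). Your invariant argument separating the four target groups is correct, though the statement does not require it.

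The gap is that for (5,0), (6,0), (5,1), (6,1) you only describe a search. In these cases choosing the generators is the whole proof, and most of the starting points you name do not work.
\begin{itemize}
\item[(5,0)] Here $(2,0)=(1,1)\ci(1,1)$ generates a subgroup of index $4$, so it cannot be the rotation. Take $u=(1,1)$ and $v=(0,1)$. Then $u^{\ci 2k}=(2k,0)$, $u^{\ci(2k+1)}=(2k+1,1)$, $v\ci u\ci v=(-1,1)=u^{-1}$, $u^{\ci 2k}\ci v=(2k,1)$ and $u^{\ci(2k+1)}\ci v=(2k+1,0)$.
\item[(6,0)] Take $u=(0,1)$ and $v=(1,0)$. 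Then $u^{\ci 2k}=(2k,0)$, $u^{\ci(2k+1)}=(-2k,1)$ and $v\ci u\ci v=(2,1)=u^{-1}$. The elements $u^{\ci k}\ci v$ run over all $(a,s)$ with $a$ odd.
\item[(5,1), (6,1)] Neither $(1,0)$ nor $(0,1)$ is central; for example, in (5,1) we have $(1,0)\ci(2,0)=(-1,0)\neq(3,0)=(2,0)\ci(1,0)$.
\end{itemize}
In (5,1) the central involution is $z=(1,1)$, since $z\ci(b,t)=(b+1-2t,t+1)=(b,t)\ci z$. A complement is $\{(a,0)\}=\langle (2,0),(1,0)\rangle\cong D_\infty$, with $(1,0)\ci(2,0)\ci(1,0)=(-2,0)$. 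The remaining elements come from $(a,0)\ci z=(a+1,1)$.

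In (6,1) the central involution is $z=(2,1)$, since $z\ci(b,t)=(2-b,t+1)=(b,t)\ci z$. Take $u=(0,1)$ and $v=(1,0)$. Then $u^{\ci 2}=(4,0)$ and $v\ci u\ci v=(4,1)=u^{-1}$. We get $\langle u,v\rangle=\{(4k,0),(4k+1,0),(4k,1),(4k-1,1)\}$, and multiplying by $z$ supplies the other residues mod $4$.

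These are exactly the paper's generators. Once you have them, your normal-form checks go through.
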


\begin{proof}
  Write $\Z=\langle x\rangle$.
  For (2,1) or (4,1), $x\mapsto(0,1)$  gives an isomorphism.
  
  Write $\Z\times \Z_2=\langle x\rangle\times \langle y\rangle$.
  For (2,0) or (4,0), $x\mapsto(1,0)$, $y\mapsto(0,1)$
  gives an isomorphism.
  
  Write $D_\infty=\Z\rtimes \Z_2=\langle x\rangle\rtimes\langle y\rangle$.
  For (1) or (3), $x\mapsto(1,0)$, $y\mapsto(0,1)$ gives an isomorphism.
  For (5,0), $x\mapsto(1,1)$, $y\mapsto(0,1)$ gives an isomorphism.
  For (6,0), $x\mapsto(0,1)$, $y\mapsto(1,0)$ gives an isomorphism.

  Write $D_\infty\times \Z_2=(\Z\rtimes \Z_2)\times \Z_2
  =(\langle x\rangle\rtimes\langle y\rangle)\times\langle z\rangle$.
  For (5,1), $x\mapsto(2,0)$, $y\mapsto(1,0)$, $z\mapsto(1,1)$
  gives an isomorphism.
  For (6,1), $x\mapsto(0,1)$, $y\mapsto(1,0)$, $z\mapsto(2,1)$
  gives an isomorphism.
\end{proof}

\section{$\gamma$-Maps}
In this section, we will determine all $\gamma$-maps on $D_\infty$.
We will consider their equivalence in the next section.
For a $\gamma$-map $\gamma$, we write
$$\gamma(a,s)=[p(a,s),q(a,s)],$$
where $p:D_\infty\to \Z$ and $q:D_\infty\to \Z_2$.
Easily, we can see that $p(0,0)=0$ and $q(0,0)=0$.

\begin{prop}\label{prop_gamma}
  A map $\gamma:D_\infty\to \Aut(D_\infty)$ is a $\gamma$-map if and only if
  the following two equations hold for all $(a,s), (b,t)\in D_\infty$:
  \begin{align}
    p(a,s)+(-1)^{q(a,s)}p(b,t)
    &=p(a+(-1)^{s+q(a,s)}b+(-1)^sp(a,s)t,s+t),\tag{$\star$}\\
    q(a,s)+q(b,t)
    &=q(a+(-1)^{s+q(a,s)}b+(-1)^sp(a,s)t,s+t).\tag{$\star\star$}
  \end{align}
  We remark that the second equation is in $\Z_2$.
\end{prop}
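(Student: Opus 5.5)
The plan is to unwind the defining identity $\gamma(a\gamma(a)(b))=\gamma(a)\gamma(b)$ of a $\gamma$-map in the coordinates of Proposition \ref{prop2.2}. Both sides are elements of $\Aut(D_\infty)=\{[b,t]\}$. Since $[b,t]\mapsto(b,t)$ is a bijection onto $\Z\times\Z_2$, two automorphisms are equal exactly when their first entries (in $\Z$) and second entries (in $\Z_2$) agree. So the identity for all pairs of elements is equivalent to the pair of coordinate identities, and all the work is a direct computation.

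First I compute the argument $a\gamma(a)(b)$, with $a$ replaced by $(a,s)$ and $b$ by $(b,t)$. Write $\gamma(a,s)=[p,q]$ with $p=p(a,s)$ and $q=q(a,s)$. The formula $[p,q](b,t)=((-1)^q b+pt,\,t)$ gives
\[
(a,s)\,\gamma(a,s)(b,t)=(a,s)\bigl((-1)^{q}b+pt,\,t\bigr).
\]
The group law $(a,s)(c,t)=(a+(-1)^s c,\,s+t)$ then turns this into
\[
\bigl(a+(-1)^{s+q}b+(-1)^s p t,\; s+t\bigr).
\]
This is exactly the argument appearing on the right-hand sides of $(\star)$ and $(\star\star)$.

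Second, I compute the right-hand side $\gamma(a,s)\gamma(b,t)$. By Proposition \ref{prop2.2}(1),
\[
[p(a,s),q(a,s)]\,[p(b,t),q(b,t)]=\bigl[\,p(a,s)+(-1)^{q(a,s)}p(b,t),\; q(a,s)+q(b,t)\,\bigr].
\]
Equating first entries gives $(\star)$, and equating second entries gives $(\star\star)$, the latter read in $\Z_2$. Both implications hold because every step is an equivalence, valid for every choice of $(a,s),(b,t)$.

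I expect no real obstacle here. The only point needing care is the parity bookkeeping. In the exponent $(-1)^{s+q}$, the terms $s$ and $q$ live in $\Z_2$, which is harmless because $(-1)^k$ depends only on $k$ mod $2$. In the term $pt$, $t\in\Z_2$ is embedded in $\Z$ as $0$ or $1$, following the paper's convention, so that $[p,q](b,t)$ is computed correctly.
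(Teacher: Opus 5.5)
Your proposal is correct and takes the same route as the paper, whose proof just says the result follows from the definition of $\gamma$-maps. You write that unwinding out explicitly: you compute $(a,s)\,\gamma(a,s)(b,t)=\bigl(a+(-1)^{s+q(a,s)}b+(-1)^s p(a,s)t,\ s+t\bigr)$, use the composition rule $[a,s][b,t]=[a+(-1)^sb,\,s+t]$, and compare coordinates, which is legitimate because distinct pairs $(b,t)$ give distinct automorphisms $[b,t]$.
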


\begin{proof}
  The result holds by the definition of $\gamma$-maps.
\end{proof}

We divide our arguments into two cases:
the case $q(1,0)=0$ and the case $q(1,0)=1$.

\subsection{The case $q(1,0)=0$}
In this subsection, we assume $q(1,0)=0$.

\begin{lemm}\label{lem4.2}
  $p(b,0)=bp(1,0)$, $q(b,0)=0$,
  $p(b,1)=p(0,1)-bp(1,0)$, and $q(b,1)=q(0,1)$
  for all $b\in \Z$.
\end{lemm}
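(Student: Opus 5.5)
The plan is to specialize the two identities $(\star)$ and $(\star\star)$ of Proposition \ref{prop_gamma} to well-chosen pairs $(a,s),(b,t)$. First I derive the statements for $(b,0)$ by induction on $b$. Then I get the statements for $(b,1)$ in one step by taking $(a,s)=(0,1)$.

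\emph{Step 1 (elements $(b,0)$).} Take $(a,s)=(1,0)$ and $(b,t)=(b,0)$ in $(\star)$ and $(\star\star)$. Since $q(1,0)=0$ and $t=0$, the argument on the right-hand side is $(1+b,0)$. We obtain
$$p(b+1,0)=p(b,0)+p(1,0),\qquad q(b+1,0)=q(b,0).$$
Starting from $p(0,0)=0$ and $q(0,0)=0$, induction upward gives $p(b,0)=bp(1,0)$ and $q(b,0)=0$ for $b\ge 0$. For negative $b$, I read the same recursions backwards, as $p(b,0)=p(b+1,0)-p(1,0)$ and $q(b,0)=q(b+1,0)$, and induct downward. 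This proves the first two claims for all $b\in\Z$.

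\emph{Step 2 (elements $(b,1)$).} Put $\epsilon=(-1)^{q(0,1)}$. Take $(a,s)=(0,1)$ and $(b,t)=(c,0)$ in $(\star)$ and $(\star\star)$. Because $t=0$, the term $(-1)^sp(a,s)t$ vanishes, and the argument on the right becomes $((-1)^{1+q(0,1)}c,1)=(-\epsilon c,1)$. Using Step 1 for $p(c,0)$ and $q(c,0)$, this gives
$$p(0,1)+\epsilon\, c\,p(1,0)=p(-\epsilon c,1),\qquad q(0,1)+0=q(-\epsilon c,1).$$
For a given $b\in\Z$, I choose $c=-\epsilon b$, so that $-\epsilon c=b$ and $\epsilon c=-b$. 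Then the identities read $p(b,1)=p(0,1)-bp(1,0)$ and $q(b,1)=q(0,1)$, which are the remaining two claims.

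There is no real obstacle here. The only point needing care is the choice of pairs. The natural first attempt, $(a,s)=(1,0)$ with $t=1$, shifts the first coordinate by $1+p(1,0)$ rather than by $1$, so it does not by itself reach every $(b,1)$. Using $(0,1)$ as the left element avoids this, because there the $p(a,s)t$ term disappears and the map $c\mapsto -\epsilon c$ is a bijection of $\Z$.
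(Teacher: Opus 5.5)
Your proof is correct and follows the paper's argument. You specialize $(\star)$ and $(\star\star)$ at $(a,s)=(1,0)$, $t=0$ to get the recursions for $(b,0)$, then at $(a,s)=(0,1)$, $t=0$ to obtain the values at $(b,1)$; you just make explicit the two-sided induction and the reindexing $c=-\epsilon b$ that the paper leaves implicit.
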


\begin{proof}
  Set $(a,s)=(1,0)$ and $t=0$ for Proposition \ref{prop_gamma}.
  By $q(1,0)=0$, we have
  $p(1,0)+p(b,0)=p(1+b,0)$ and $q(1,0)+q(b,0)=q(1+b,0)$.
  Thus $p(b,0)=bp(1,0)$ and $q(b,0)=0$.

  Set $(a,s)=(0,1)$ and $t=0$ for Proposition \ref{prop_gamma} ($\star$).
  We have
  $p(0,1)+(-1)^{q(0,1)}bp(1,0)=p((-1)^{1+q(0,1)}b,1)$.
  Thus $p(b,1)=p(0,1)-bp(1,0)$.

  Set $(a,s)=(0,1)$ and $t=0$ for Proposition \ref{prop_gamma} ($\star\star$).
  We have
  $q(0,1)+q(b,0)=q((-1)^{1+q(0,1)}b,1)$.
  Thus $q(b,1)=q(0,1)+q((-1)^{1+q(0,1)}b,0)=q(0,1)$.
\end{proof}

\begin{lemm}\label{lem4.3}
  $p(1,0)=0$ or $-2$. Moreover,
  \begin{enumerate}[{\rm (1)}]
    \item if $q(0,1)=0$,
    then $(p(1,0),p(0,1))=(0,0)$ or $(-2, k)$ for some $k\in \Z$,
    \item if $q(0,1)=1$,
    then $(p(1,0),p(0,1))=(-2,0)$ or $(0, k)$ for some $k\in \Z$.
  \end{enumerate}
\end{lemm}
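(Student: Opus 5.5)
The plan is to substitute the explicit formulas of Lemma~\ref{lem4.2} back into ($\star$) of Proposition~\ref{prop_gamma}, this time with $t=1$. The proof of Lemma~\ref{lem4.2} used only $t=0$, so the $t=1$ instances are where new constraints on $P:=p(1,0)$, $Q:=p(0,1)$ and $r:=q(0,1)$ should come from. Throughout I will use
$$p(b,0)=bP,\quad q(b,0)=0,\quad p(b,1)=Q-bP,\quad q(b,1)=r.$$

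First, to get $P\in\{0,-2\}$, I take $(a,s)=(a,0)$ and $(b,t)=(b,1)$ in ($\star$). Since $q(a,0)=0$ and $p(a,0)=aP$, the left side is
$$aP+p(b,1)=aP+Q-bP.$$
The right side is $p(a+b+aP,1)=Q-(a+b+aP)P$. Equating the two gives $2aP+aP^2=0$ for all $a$. Taking $a=1$ yields $P(P+2)=0$, so $P=0$ or $P=-2$.

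Second, to relate $Q$, $P$ and $r$, I take $(a,s)=(0,1)$ and $(b,t)=(b,1)$ in ($\star$). The left side is
$$Q+(-1)^r(Q-bP).$$
The first coordinate on the right is $(-1)^{1+r}b-Q$ and the second is $0$, so the right side equals $\bigl((-1)^{1+r}b-Q\bigr)P$. The terms involving $b$ agree identically. Comparing the constant terms gives
$$Q+(-1)^rQ=-QP.$$

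I then split on $r$.
\begin{itemize}
\item[(1)] If $r=0$, the constant relation reads $Q(P+2)=0$. So $P=0$ forces $Q=0$, while $P=-2$ leaves $Q=k$ arbitrary.
\item[(2)] If $r=1$, it reads $QP=0$. So $P=-2$ forces $Q=0$, while $P=0$ leaves $Q=k$ arbitrary.
\end{itemize}
These are exactly the two cases of the statement.

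I do not expect a real obstacle, since the lemma only asks for necessary conditions. The one step needing care is evaluating the first coordinate $a+(-1)^{s+q(a,s)}b+(-1)^sp(a,s)t$ correctly with the signs from $s=1$ and $q(0,1)=r$. I would double-check it by verifying that the $b$-terms cancel, as they must.
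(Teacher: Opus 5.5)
Your proof is correct and is essentially the paper's argument. The paper applies ($\star$) with $(a,s)=(1,0),(b,t)=(0,1)$ and with $(a,s)=(b,t)=(0,1)$, uses the formulas of Lemma~\ref{lem4.2}, and splits on $q(0,1)$, which the paper's proof misprints as $q(1,0)$; your two substitutions specialize to these at $a=1,b=0$ and at $b=0$.
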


\begin{proof}
  Set $(a,s)=(1,0)$ and $(b,t)=(0,1)$ for ($\star$).
  We have $p(1,0)+p(0,1)=p(1+0+p(1,0),1)$ and so
  $p(1,0)(2+p(1,0))=0$. This shows $p(1,0)=0$ or $-2$.
  
  Set $(a,s)=(b,t)=(0,1)$ for ($\star$).
  We have $p(0,1)+(-1)^{q(0,1)}p(0,1)=p(-p(0,1),0)=-p(0,1)p(1,0)$.
  If $q(1,0)=0$, then $p(0,1)(2+p(1,0))=0$
  and this means $p(0,1)=0$ or $p(1,0)=-2$.
  If $q(1,0)=1$, then $p(0,1)p(1,0)=0$
  and this means $p(0,1)=0$ or $p(1,0)=0$.
\end{proof}

\begin{prop}\label{prop00}
  Suppose $\gamma$ is a $\gamma$-map and $q(1,0)=q(0,1)=0$.
  Then one of the following statements holds:
  \begin{enumerate}[{\rm (1)}]
    \item $p(a,s)=0$, $q(a,s)=0$,
    \item there exists $k\in\Z$ such that
    $p(a,s)=sk+(-1)^{s+1}2a$, $q(a,s)=0$.
  \end{enumerate}
  Moreover, they satisfy the conditions in Proposition \ref{prop_gamma}.
\end{prop}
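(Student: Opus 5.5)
Both directions rest on Lemmas \ref{lem4.2} and \ref{lem4.3}, which already pin $\gamma$ down almost completely.

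\emph{Necessity.} Assume $q(1,0)=q(0,1)=0$. By Lemma \ref{lem4.2} we get $q(b,0)=0$ and $q(b,1)=q(0,1)=0$ for every $b$, so $q\equiv 0$. The same lemma gives
\[
p(b,0)=bp(1,0), \qquad p(b,1)=p(0,1)-bp(1,0).
\]
Lemma \ref{lem4.3}(1) then allows only two possibilities for $(p(1,0),p(0,1))$:
\begin{itemize}
\item $(p(1,0),p(0,1))=(0,0)$. Then $p\equiv 0$, which is statement (1).
\item $(p(1,0),p(0,1))=(-2,k)$. Then $p(a,0)=-2a$ and $p(a,1)=k+2a$. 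Both are captured by $p(a,s)=sk+(-1)^{s+1}2a$, which is statement (2).
\end{itemize}
So necessity is just bookkeeping.

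\emph{Sufficiency.} We verify $(\star)$ and $(\star\star)$ of Proposition \ref{prop_gamma}. Since $q\equiv 0$, $(\star\star)$ reads $0=0$. In case (1), $(\star)$ also reads $0=0$.

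In case (2), $q\equiv 0$ reduces $(\star)$ to
\[
p(a,s)+p(b,t)=p\bigl(a+(-1)^sb+(-1)^sp(a,s)t,\ s+t\bigr).
\]
Write $c=a+(-1)^sb+(-1)^sp(a,s)t$. I would check this over the four values of $(s,t)\in\Z_2^2$. Reducing $s+t$ modulo $2$ matters in the case $s=t=1$, where the right-hand side is evaluated at second coordinate $0$.

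Take the case $s=t=1$ as a sample. The left side is $(k+2a)+(k+2b)$. For the right side,
\[
c=a-b-(k+2a)=-a-b-k, \qquad p(c,0)=-2c=2a+2b+2k,
\]
which agrees with the left side.

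The other cases go the same way:
\begin{itemize}
\item $s=t=0$: both sides equal $-2a-2b$.
\item $s=0,\ t=1$: here $c=a+b-2a=b-a$, and both sides equal $k+2(b-a)$.
\item $s=1,\ t=0$: here $c=a-b$, and both sides equal $k+2(a-b)$.
\end{itemize}

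The only real obstacle is careful sign bookkeeping with $(-1)^s$ and the embedding of $\Z_2$ into $\Z$, especially when $s+t$ wraps around to $0$. Otherwise the argument is routine verification.
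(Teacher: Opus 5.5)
Your proof is correct and matches the paper's: necessity comes directly from Lemmas \ref{lem4.2} and \ref{lem4.3}(1), and sufficiency is a direct case check of $(\star)$ over $(s,t)$. The paper groups the two cases with $t=0$ together and then treats $(0,1)$ and $(1,1)$, while you split $t=0$ into two cases; your computations in all four cases are right.
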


\begin{proof}
  Consider the case in Lemma \ref{lem4.3} (1).
  By Lemma \ref{lem4.2}, $q(a,s)=0$.
  If $(p(1,0),p(0,1))=(0,0)$, then (1) holds.
  If $(p(1,0),p(0,1))=(-2,k)$, then
  we can write $p(a,s)=sk+(-1)^{s+1}2a$.

  For (1),  the conditions in Proposition \ref{prop_gamma} are clear.
  We consider ($\star$) for (2).
  \begin{align*}
    \mathrm{LHS} &= sk+(-1)^{s+1}2a+tk+(-1)^{t+1}2b,\\
    \mathrm{RHS} &= p(a+(-1)^sb+(-1)^sp(a,s)t,s+t)\\
                 &= p(a+(-1)^sb+(-1)^s(sk+(-1)^{s+1}2a)t,s+t)\\
                 &= (s+t)k+(-1)^{s+t+1}2(a+(-1)^sb+(-1)^s(sk+(-1)^{s+1}2a)t)\\
                 &= (s+t)k+(-1)^{s+t+1}2a+(-1)^{t+1} 2b
                   +(-1)^{t+1}2stk+(-1)^{s+t}4at.
  \end{align*}
  We remark that $(s+t)=0$ if $s=t=1$.
  
  If $t=0$, then
  $$\mathrm{LHS} = sk+(-1)^{s+1}2a-2b= \mathrm{RHS}.$$ 
  If $(s,t)=(0,1)$, then
  \begin{align*}
    \mathrm{LHS} &= -2a+k+2b,\\
    \mathrm{RHS} &= k+2a+2b-4a = -2a+k+2b.
  \end{align*}
  If $(s,t)=(1,1)$, then
  \begin{align*}
    \mathrm{LHS} &= 2k+2a+2b,\\
    \mathrm{RHS} &= -2a+2b+2k+4a=2k+2a+2b.
  \end{align*}
  For every case, the equation ($\star$) holds.
  The equation ($\star\star$) for (2) is clear.
\end{proof}

\begin{prop}\label{prop01}
  Suppose $\gamma$ is a $\gamma$-map and $q(1,0)=0$, $q(0,1)=1$.
  Then one of the following statements holds:
  \begin{enumerate}[{\rm (1)}]
    \item $p(a,s)=(-1)^{s+1}2a$, $q(a,s)=s$,
    \item $p(a,s)=sk$, $q(a,s)=s$.
  \end{enumerate}
  They satisfy the conditions in Proposition \ref{prop_gamma}.
\end{prop}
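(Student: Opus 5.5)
The argument has two parts: first pin down the shape of $\gamma$, then check that both candidate forms satisfy ($\star$) and ($\star\star$).

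\textbf{Determining the form of $\gamma$.} Since $q(1,0)=0$, Lemma \ref{lem4.2} applies. It gives $q(a,0)=0$ and $q(a,1)=q(0,1)=1$, so $q(a,s)=s$ for all $(a,s)$. It also gives $p(a,0)=ap(1,0)$ and $p(a,1)=p(0,1)-ap(1,0)$.

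By Lemma \ref{lem4.3}(2), with $q(0,1)=1$, we have $(p(1,0),p(0,1))=(-2,0)$ or $(0,k)$ for some $k\in\Z$. (The proof of that lemma writes $q(1,0)$ where $q(0,1)$ is meant, but the derived condition $p(0,1)p(1,0)=0$ is the correct one.) We substitute each case:
\begin{itemize}
\item[(a)] If $(p(1,0),p(0,1))=(-2,0)$, then $p(a,0)=-2a$ and $p(a,1)=2a$. Hence $p(a,s)=(-1)^{s+1}2a$, which is statement (1).
\item[(b)] If $(p(1,0),p(0,1))=(0,k)$, then $p(a,0)=0$ and $p(a,1)=k$. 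Hence $p(a,s)=sk$, which is statement (2).
\end{itemize}

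\textbf{Verifying the conditions.} We substitute into Proposition \ref{prop_gamma}, noting that $q(a,s)=s$ makes the sign $(-1)^{s+q(a,s)}$ equal to $1$. So the argument on the right-hand side is
$$\bigl(a+b+(-1)^s p(a,s)t,\ s+t\bigr).$$
Condition ($\star\star$) reads $s+t=s+t$ in $\Z_2$, so it is immediate in both cases.

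For ($\star$) in case (b), the left side is $sk+(-1)^s tk$. The right side is $(s+t)k$, where $s+t$ is read in $\Z_2$ and then embedded in $\Z$. We check the four values of $(s,t)$:
\begin{itemize}
\item[(i)] $(0,0)$: both sides are $0$.
\item[(ii)] $(1,0)$: both sides are $k$.
\item[(iii)] $(0,1)$: both sides are $k$.
\item[(iv)] $(1,1)$: the left side is $k-k=0$, and the right side is $0\cdot k=0$.
\end{itemize}

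For ($\star$) in case (a), the left side is $(-1)^{s+1}2a+(-1)^{s}(-1)^{t+1}2b$. The right side is
$$(-1)^{s+t+1}2\bigl(a+b+(-1)^s(-1)^{s+1}2at\bigr)=(-1)^{s+t+1}2(a+b-2at).$$
We again split on $(s,t)$:
\begin{itemize}
\item[(i)] $t=0$: both sides equal $(-1)^{s+1}2(a+b)$.
\item[(ii)] $t=1$: the right side is $(-1)^s2(a+b-2a)=(-1)^s2(b-a)$. The left side is $(-1)^{s+1}2a+(-1)^s2b$, which is the same.
\end{itemize}

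The only delicate point is the $\Z_2$-to-$\Z$ embedding convention in case (b) when $s=t=1$, handled in (iv). Everything else is routine bookkeeping.
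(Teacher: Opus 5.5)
Your proof is correct and follows the paper's argument essentially step for step. You read off $q(a,s)=s$ and the two possible forms of $p$ from Lemmas \ref{lem4.2} and \ref{lem4.3}(2), then verify ($\star$) and ($\star\star$) by direct substitution, including the case $s=t=1$ where $s+t=0$. You are also right that the proof of Lemma \ref{lem4.3} writes $q(1,0)$ where $q(0,1)$ is meant.
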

  
\begin{proof}
  Consider the case in Lemmas \ref{lem4.2} and \ref{lem4.3} (2).
  We obtain (1) and (2).

  Consider the condition ($\star$) for (1).
  \begin{align*}
    \mathrm{LHS} &= (-1)^{s+1}2a+(-1)^s(-1)^{t+1}2b\\
                 &= (-1)^{s+1}2a+(-1)^{s+t+1}2b,\\
    \mathrm{RHS} &= p(a+b-2at,s+t)\\
                 &= (-1)^{s+t+1}2(a+b-2at)\\
                 &= (-1)^{s+t+1}2a+(-1)^{s+t+1}2b+(-1)^{s+t}4at\\
                 &= (-1)^{s+t}2a(-1+2t)+(-1)^{s+t+1}2b.
  \end{align*}
  Thus ($\star$) holds, and ($\star\star$) for (1) is clear.

  For the equation ($\star$) for (2),
  \begin{align*}
    \mathrm{LHS} &= sk+(-1)^stk,\\
    \mathrm{RHS} &= p(a+b+(-1)^skst,s+t)
                     = (s+t)k.
  \end{align*}
  We remark that $s+t=0$ if $s=t=1$.
  It is easy to verify that $\mathrm{LHS}= \mathrm{RHS}$.
  For ($\star\star$), clearly $\mathrm{LHS}=s+t=\mathrm{RHS}$ holds.
\end{proof}

\subsection{The case $q(1,0)=1$}
In this subsection, we assume $q(1,0)=1$.

\begin{lemm}\label{lem4.6}
  $p(2\ell,0)=\ell p(2,0)$,
  $p(2\ell +1, 0)=\ell p(2,0)+p(1,0)$, and
  \begin{enumerate}[{\rm (1)}]
    \item if $q(0,1)=0$, then $q(a,s)=a$,
    \item if $q(0,1)=1$, then $q(a,s)=a+s$.
  \end{enumerate}
\end{lemm}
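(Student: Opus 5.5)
Specialize ($\star$) and ($\star\star$) of Proposition \ref{prop_gamma} to convenient pairs, in the spirit of Lemma \ref{lem4.2}, now using $q(1,0)=1$. The first step is to take $(a,s)=(1,0)$ and $t=0$. Since $(-1)^{s+q(a,s)}=(-1)^{1}=-1$, the relations become
\begin{align*}
p(1,0)-p(b,0)&=p(1-b,0),\\
1+q(b,0)&=q(1-b,0).
\end{align*}
With $b=1$ the first gives $p(0,0)=0$, which is consistent. To move in steps of $2$, replace $b$ by $1-b$ in the first relation to get $p(1,0)-p(1-b,0)=p(b,0)$, which is the same relation and so gives nothing new. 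I therefore compose twice: apply the relation to $b$ and then to $-1-b$. From $p(1-b,0)=p(1,0)-p(b,0)$ and $p(2+b,0)=p(1,0)-p(-1-b,0)$, more relations are needed. A cleaner route is to take $(a,s)=(2,0)$. Compute $q(2,0)$ first: with $b=-1$ the second relation gives $q(2,0)=1+q(-1,0)$. With $b=2$ it gives $q(-1,0)=1+q(2,0)$, which is circular. Instead use $(a,s)=(b',0)$ generally: ($\star\star$) with $s=t=0$ reads $q(a,0)+q(b,0)=q(a+(-1)^{q(a,0)}b,0)$. This shows that $a\mapsto q(a,0)$ behaves like a homomorphism in a twisted sense.

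I will prove $q(a,0)\equiv a$ by induction. From $1+q(b,0)=q(1-b,0)$, setting $b=0$ gives $q(1,0)=1$, and setting $b=1$ gives $q(0,0)=0$. For general $b$, strong induction on $|b|$ with the map $b\mapsto 1-b$ together with ($\star\star$) for $(a,0)=(-1,0)$ should work. First get $q(-1,0)$: take $(a,s)=(b,t)=(1,0)$, so $1+1=q(1-1,0)$, which is consistent. Take $(a,s)=(-1,0)$ and $(b,t)=(1,0)$: then $q(-1,0)+1=q(-1\pm1,0)$. If $q(-1,0)=0$ the argument is $0$, which forces $1=q(0,0)=0$, a contradiction. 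So $q(-1,0)=1$. Once $q(2,0)=q(1-(-1),0)=1+q(-1,0)=0$ is known, ($\star$) with $(a,s)=(2,0)$ and $t=0$ gives $p(2,0)+p(b,0)=p(2+b,0)$ and $q(2+b,0)=q(b,0)$. This yields $q(a,0)\equiv a$ and, by induction in both directions, the formulas $p(2\ell,0)=\ell p(2,0)$ and $p(2\ell+1,0)=\ell p(2,0)+p(1,0)$.

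For $s=1$, use $(a,s)=(0,1)$ and $t=0$ in ($\star\star$): $q(0,1)+q(b,0)=q((-1)^{1+q(0,1)}b,1)$. Since $q(b,0)\equiv b\equiv -b$, this gives $q(b,1)=q(0,1)+b$. Substituting $q(0,1)=0$ or $1$ gives cases (1) and (2).

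The only delicate point is pinning down $q(-1,0)$ and $q(2,0)$ so as to obtain the translation by $2$ in ($\star$). After that everything is a routine induction.
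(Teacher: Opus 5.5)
Your proof is correct and is essentially the paper's: the same contradiction with $(a,s)=(-1,0)$, $(b,t)=(1,0)$ forces $q(-1,0)=1$, then ($\star$) at $(a,s)=(2,0)$, $t=0$ gives the formulas for $p(\cdot,0)$, and ($\star\star$) at $(a,s)=(0,1)$, $t=0$ gives $q(\cdot,1)$. The only variation is how you get $q(a,0)\equiv a$: you derive $q(2,0)=1+q(-1,0)=0$ from the $(1,0)$-relation at $b=-1$ and then use $2$-periodicity, whereas the paper substitutes $b=(-1)^{q(a,0)}$ into ($\star\star$) to get $q(a+1,0)=q(a,0)+1$ directly; in a final write-up, drop the exploratory dead ends such as the unused ``strong induction on $|b|$'' remark.
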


\begin{proof}
  Set $(a,s)=(-1,0)$ and $(b,t)=(1,0)$ for  ($\star\star$).
  We get $q(-1,0)+q(1,0)=q(-1+(-1)^{q(-1,0)},0)$.
  If $q(-1,0)=0$, then $0+1=q(-1+1,0)=q(0,0)=0$ and this is a contradiction.
  Thus $q(-1,0)=1$.
  
  Set $s=0$ and $(b,t)=((-1)^{q(a,0)},0)$  for ($\star\star$). 
  We have $q(a,0)+q((-1)^{q(a,0)},0)=q(a+1,0)$.
  Since $q(a,0)\in \{0, 1\}$ and $q(0,0)=0$,
  we can conclude that $q(a,0)=a$ (in $\Z_2$).

  Set $(a,s)=(2,0)$ and $t=0$ for ($\star$).
  We get $p(2,0)+p(b,0)=p(2+b,0)$.
  This shows $p(2\ell,0)=\ell p(2,0)$ and
  $p(2\ell +1, 0)=\ell p(2,0)+p(1,0)$.

  Set $(a,s)=(0,1)$ and $t=0$ for ($\star\star$). .
  We have $q(0,1)+q(b,0)=q((-1)^{1+q(0,1)}b,1)$.
  If $q(0,1)=0$, then $b=q(b,0)=q(-b,1)$ and this shows $q(a,s)=a$.
  If $q(0,1)=1$, then $1+b=1+q(b,0)=q(b,1)$ and this shows $q(a,s)=a+s$.
\end{proof}

\begin{lemm}\label{lem4.7}
  $p(a,s)$ is even for any $(a,s)\in D_\infty$.
\end{lemm}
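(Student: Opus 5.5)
The plan is to read off the parity of $p$ from the $\Z_2$-equation ($\star\star$), using the explicit form of $q$ given by Lemma \ref{lem4.6}. In the case $q(1,0)=1$, Lemma \ref{lem4.6} says that $q(c,u)\equiv c+u\,q(0,1) \pmod 2$. So $q(c,u)$ depends on $c$ only through its parity. Hence whenever the first coordinate of the argument on the right-hand side of ($\star\star$) contains $p(a,s)$, that equation pins down $p(a,s)$ modulo $2$.

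Concretely, take an arbitrary $(a,s)$ and put $(b,t)=(0,1)$ in ($\star\star$). The argument on the right becomes
\[
\bigl(a+(-1)^{s+q(a,s)}\cdot 0+(-1)^s p(a,s),\; s+1\bigr)=\bigl(a+(-1)^s p(a,s),\,s+1\bigr),
\]
and the equation reads
\[
q(a,s)+q(0,1)=q\bigl(a+(-1)^s p(a,s),\,s+1\bigr).
\]
Write $\varepsilon=q(0,1)\in\{0,1\}$ and substitute $q(c,u)=c+u\varepsilon$ on both sides, reducing mod $2$ (the sign $(-1)^s$ is irrelevant there). The left side is $a+s\varepsilon+\varepsilon$. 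The right side is $a+p(a,s)+(s+1)\varepsilon$.

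Comparing the two sides gives $p(a,s)\equiv 0\pmod 2$. This holds in both subcases (1) and (2) of Lemma \ref{lem4.6}, for both $s=0$ and $s=1$. To be safe, I will write the four subcases $(\varepsilon,s)\in\{0,1\}^2$ out explicitly rather than rely on the uniform formula, since $s+1$ must be reduced in $\Z_2$ when $s=1$.

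The only point needing care is that Lemma \ref{lem4.6} really gives $q$ on all of $D_\infty$, including the coset $s=1$, so that $q$ may be evaluated at the shifted argument. It does. There is no real obstacle, and the proof is a single substitution followed by a parity check.
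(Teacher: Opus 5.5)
Your proposal is correct and is essentially the paper's proof: substitute $(b,t)=(0,1)$ into $(\star\star)$, then use the explicit form of $q$ from Lemma \ref{lem4.6} to get $q(a,s)+q(0,1)=q\bigl(a+(-1)^s p(a,s),\,s+1\bigr)$, which forces $p(a,s)\equiv 0 \pmod 2$. The only difference is presentational: the paper treats $q(0,1)=0$ and $q(0,1)=1$ as separate cases, whereas you use the single formula $q(c,u)=c+u\varepsilon$, which is equally valid.
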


\begin{proof}
  Set $(b,t)=(0,1)$ for ($\star\star$).
  We have $q(a,s)+q(0,1)=q(a+(-1)^sp(a,s),1+s)$ in $\Z_2$.
  If $q(0,1)=0$, then $a=q(a,s)=q(a+(-1)^sp(a,s),1+s)=a+(-1)^sp(a,s)$
  and this shows $p(a,s)$ is even.
  If $q(0,1)=1$, then $a+s+1=q(a,s)+1=q(a+(-1)^sp(a,s),1+s)=a+s+1+(-1)^sp(a,s)$
  and this shows $p(a,s)$ is even.
\end{proof}

We consider the case $q(0,1)=0$.

\begin{lemm}\label{lem4.8}
  If $q(0,1)=0$, then $p(2\ell,1) = p(0,1)-\ell p(2,0)$,
  $p(2\ell-1,1)= p(0,1)-\ell p(2,0)+p(1,0)$,
  and $p(2,0)=0$ or $-4$.
\end{lemm}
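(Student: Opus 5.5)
The plan is to specialize the identity ($\star$) of Proposition \ref{prop_gamma} at a few well-chosen points, exactly as in Lemma \ref{lem4.2}. Throughout we use the standing hypotheses $q(1,0)=1$ and $q(0,1)=0$. By Lemma \ref{lem4.6}(1) these give $q(a,s)=a$ in $\Z_2$. We also use the formulas $p(2\ell,0)=\ell p(2,0)$ and $p(2\ell+1,0)=\ell p(2,0)+p(1,0)$ from Lemma \ref{lem4.6}.

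First, set $(a,s)=(0,1)$ and $t=0$ in ($\star$). Since $q(0,1)=0$, the sign is $(-1)^{1+0}=-1$, and ($\star$) becomes
$$p(0,1)+p(b,0)=p(-b,1)\qquad(b\in\Z).$$
Taking $b=-2\ell$ gives $p(2\ell,1)=p(0,1)+p(-2\ell,0)=p(0,1)-\ell p(2,0)$. Taking $b=-(2\ell-1)=2(-\ell)+1$ gives
$$p(2\ell-1,1)=p(0,1)+p(2(-\ell)+1,0)=p(0,1)-\ell p(2,0)+p(1,0).$$
This proves the first two formulas.

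For the constraint on $p(2,0)$, set $(a,s)=(2,0)$ and $(b,t)=(0,1)$ in ($\star$). Since $q(2,0)=0$, this gives
$$p(2,0)+p(0,1)=p(2+p(2,0),1).$$
By Lemma \ref{lem4.7}, $p(2,0)$ is even, so $2+p(2,0)=2m$ with $m=1+p(2,0)/2$. The formula just proved gives $p(2m,1)=p(0,1)-m\,p(2,0)$. Comparing the two expressions, $p(0,1)$ cancels and we get $p(2,0)=-(1+p(2,0)/2)\,p(2,0)$, that is, $p(2,0)\bigl(4+p(2,0)\bigr)=0$. Hence $p(2,0)=0$ or $-4$.

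The only delicate points are tracking the signs $(-1)^{s+q(a,s)}$ correctly and knowing in advance that $2+p(2,0)$ is even. The latter is exactly what Lemma \ref{lem4.7} supplies, and it is what lets us apply the formula for $p$ at even first coordinate.
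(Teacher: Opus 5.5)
Your proof is correct and is essentially the paper's argument: the same two specializations of ($\star$), namely $(a,s)=(0,1)$ with $t=0$, and $(a,s)=(2,0)$ with $(b,t)=(0,1)$, combined with Lemma \ref{lem4.6}. The one difference is that you cite Lemma \ref{lem4.7} explicitly to justify evaluating $p(2+p(2,0),1)$ with the even-index formula. The paper leaves that step implicit here, though it does cite Lemma \ref{lem4.7} at the corresponding point in Lemma \ref{lem4.10}.
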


\begin{proof}
  Set $(a,s)=(0,1)$ and $t=0$ for ($\star$).
  We have $p(0,1)+p(b,0)=p(-b,1)$.
  With Lemma \ref{lem4.6}, we have $p(2\ell,1) = p(0,1)-\ell p(2,0)$, and
  $p(2\ell-1,1)= p(0,1)-\ell p(2,0)+p(1,0)$.

  Set $(a,s)=(2,0)$ and $(b,t)=(0,1)$ for ($\star$).
  We have $p(2,0)+p(0,1)=p(2+p(2,0),1)$ and so
  $p(2,0)(p(2,0)+4)=0$.
\end{proof}

\begin{prop}\label{prop10}
  If $q(1,0)=1$ and $q(0,1)=0$, then one of the following statements holds:
  \begin{enumerate}[{\rm (1)}]
    \item 
    $p(2\ell,s)= 0$,
    $p(2\ell-1,s) = 2k$, and
    $q(a,s)=a$ 
    for some $k\in \Z$,
    \item 
    $p(2\ell,0)= -4\ell$,
    $p(2\ell+1,0) = -4\ell+2k$,
    $p(2\ell,1) = 4\ell-2k-2$,
    $p(2\ell-1,1) =4\ell-2$, and
    $q(a,s)=a$
    for some $k\in \Z$.
  \end{enumerate}
  They satisfy the conditions in Proposition \ref{prop_gamma}.
\end{prop}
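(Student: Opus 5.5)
The plan is to derive the two families from Lemmas \ref{lem4.6}, \ref{lem4.7} and \ref{lem4.8} by plugging a few special pairs into ($\star$). Then I verify ($\star$) and ($\star\star$) directly.

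\emph{Setup.} Throughout $q(a,s)=a$ by Lemma \ref{lem4.6}(1). By Lemma \ref{lem4.7}, $p(1,0)$ is even, so write $p(1,0)=2k$. By Lemma \ref{lem4.8}, $p(2,0)\in\{0,-4\}$. Lemmas \ref{lem4.6} and \ref{lem4.8} then express every value of $p$ in terms of $p(0,1)$, $k$ and $p(2,0)$. It therefore remains only to pin down $p(0,1)$ in each of the two cases.

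\emph{Case $p(2,0)=0$.} Here $p(2\ell,0)=0$, $p(2\ell+1,0)=2k$, $p(2\ell,1)=p(0,1)$ and $p(2\ell-1,1)=p(0,1)+2k$.
Apply ($\star$) with $(a,s)=(b,t)=(0,1)$. Since $q(0,1)=0$, this gives $2p(0,1)=p(-p(0,1),0)$.
Because $p(0,1)$ is even, the right-hand side is $p(\text{even},0)=0$. Hence $p(0,1)=0$, which is exactly family (1).

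\emph{Case $p(2,0)=-4$.} Here $p(2\ell,0)=-4\ell$, $p(2\ell+1,0)=-4\ell+2k$, $p(2\ell,1)=p(0,1)+4\ell$ and $p(2\ell-1,1)=p(0,1)+4\ell+2k$.
The identity from the previous case, $2p(0,1)=p(-p(0,1),0)$, is now automatic, so it gives no information. Instead apply ($\star$) with $(a,s)=(1,0)$ and $(b,t)=(0,1)$. Since $q(1,0)=1$, this gives
$$2k-p(0,1)=p(1+2k,1)=p(2(k+1)-1,1)=p(0,1)+4(k+1)+2k.$$
Hence $p(0,1)=-2k-2$. Substituting yields $p(2\ell,1)=4\ell-2k-2$ and $p(2\ell-1,1)=4\ell-2$, which is family (2).

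\emph{Verification that both families satisfy Proposition \ref{prop_gamma}.}
For ($\star\star$), since $p$ is even, the argument $a+(-1)^{s+a}b+(-1)^sp(a,s)t$ has parity $a+b$. Both sides therefore equal $a+b$ in $\Z_2$.

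For ($\star$), I first rewrite the families in closed form:
\begin{itemize}
\item[(1)] $p(a,s)=2k\cdot[a \text{ odd}]$.
\item[(2)] $p(a,0)=-2a+(2k+2)[a\text{ odd}]$ and $p(a,1)=2a-(2k+2)[a\text{ even}]$.
\end{itemize}
I then check ($\star$) by splitting on the parities of $a$ and $b$ and on $s,t\in\Z_2$. In each case the argument of the right-hand $p$ is an explicit linear expression whose parity is known, so the check reduces to a linear identity.

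I expect this verification of ($\star$) for family (2) to be the main obstacle. It involves up to sixteen parity cases, and sign errors coming from $(-1)^{s+q(a,s)}=(-1)^{s+a}$ are easy to make. To keep it manageable I would first treat $t=0$, which is just additivity along cosets and is essentially Lemmas \ref{lem4.6} and \ref{lem4.8}. Only then would I handle $t=1$ by parity of $a$ and value of $s$.
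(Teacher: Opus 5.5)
Your proposal is correct and follows the paper's proof essentially step for step. You pin down $p(0,1)$ with the same substitutions ($(a,s)=(b,t)=(0,1)$ when $p(2,0)=0$, and $(a,s)=(1,0)$, $(b,t)=(0,1)$ when $p(2,0)=-4$), you handle ($\star\star$) by the same parity observation, and you verify ($\star$) by the same parity/sign case split that the paper carries out in Appendices A and B. Your indicator-function closed forms are right, the sketched check goes through in every case (including your remark that the $t=0$ cases already follow from Lemmas \ref{lem4.6} and \ref{lem4.8}), and this organization is cleaner than the paper's appendix tables, which contain a few slips.
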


\begin{proof}
  Assume $p(2,0)=0$.
  Set $(a,s)=(0,1)$ and $(b,t)=(0,1)$ for ($\star$).
  By Lemmas, \ref{lem4.6}, \ref{lem4.7}, and \ref{lem4.8},
  $p(0,1)+p(0,1)=p(-p(0,1),0)=0$.
  Thus $p(0,1)=0$ and $q(a,s)=a$ hold for all
  $(a,s)\in D_\infty$.
  Since $p(1,0)$ is even, we can set $2k=p(1,0)$ for some $k\in\Z$. 
  We can determine all $p(a,s)$ as in (1)
  by Lemmas \ref{lem4.6} and \ref{lem4.8}.
  The condition ($\star\star$) holds by
  \begin{align*}
    \text{LHS} &= q(a,s)+q(b,t)=a+b,\\
    \text{RHS} &= q(a+(-1)^{s+q(a,s)}b+(-1)^sp(a,s)t,s+t)=a+b.
  \end{align*}
  We can check the equation ($\star$) for all $(a,s),(b,t)\in D_\infty$.
  See Appendix A.

  Assume $p(2,0)=-4$.
  Set $(a,s)=(1,0)$ and $(b,t)=(0,1)$ for ($\star$).
  We have $p(1,0)-p(0,1)=p(0,1)+3p(1,0)+4$,
  and this means $p(0,1)=-p(1,0)-2$.
  Since $p(1,0)$ is even, we can set $2k=p(1,0)$ for some $k\in\Z$.
  The equations in (2) hold.
  The condition ($\star\star$) holds similarly to the above case.
  We can check the condition ($\star$) for all $(a,s),(b,t)\in D_\infty$.
  See Appendix B.
\end{proof}

We consider the case $q(0,1)=1$.

\begin{lemm}\label{lem4.10}
  If $q(0,1)=1$, then
  $p(2\ell,1)= p(0,1)-\ell p(2,0)$,
  $p(2\ell+1,1)= p(0,1)-\ell p(2,0)-p(1,0)$,
  and $p(2,0)=0$ or $-4$.
\end{lemm}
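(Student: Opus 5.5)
The plan is to imitate the proofs of Lemmas \ref{lem4.6} and \ref{lem4.8}. Throughout I use the standing assumptions $q(1,0)=1$ and $q(0,1)=1$. By Lemma \ref{lem4.6}(2) these give $q(a,s)=a+s$ in $\Z_2$. I also use the formulas for $p(2\ell,0)$ and $p(2\ell+1,0)$ from Lemma \ref{lem4.6}, and the parity statement of Lemma \ref{lem4.7}.

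First, set $(a,s)=(0,1)$ and $t=0$ in ($\star$). Since $q(0,1)=1$, the sign on the left is $(-1)^{q(0,1)}=-1$. The exponent on the right is $s+q(a,s)=1+1=0$, and the $t$-term vanishes. So ($\star$) reads
$$p(0,1)-p(b,0)=p(b,1)\qquad (b\in\Z).$$
Substituting $p(2\ell,0)=\ell p(2,0)$ and $p(2\ell+1,0)=\ell p(2,0)+p(1,0)$ from Lemma \ref{lem4.6} gives the first two formulas directly:
$$p(2\ell,1)=p(0,1)-\ell p(2,0),\qquad p(2\ell+1,1)=p(0,1)-\ell p(2,0)-p(1,0).$$
Note that, unlike in Lemma \ref{lem4.8}, there is no sign flip $b\mapsto -b$ here, which is why the odd index is $2\ell+1$ rather than $2\ell-1$.

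Second, to constrain $p(2,0)$, set $(a,s)=(2,0)$ and $(b,t)=(0,1)$ in ($\star$). We have $q(2,0)=0$, so the left side is $p(2,0)+p(0,1)$. The right side is $p(2+p(2,0),1)$. By Lemma \ref{lem4.7}, $p(2,0)$ is even, so $2+p(2,0)=2m$ with $m=1+p(2,0)/2\in\Z$. Applying the even case of the formula just proved,
$$p(2,0)+p(0,1)=p(0,1)-\Bigl(1+\tfrac{p(2,0)}{2}\Bigr)p(2,0).$$
This simplifies to $p(2,0)\bigl(p(2,0)+4\bigr)=0$, so $p(2,0)=0$ or $-4$.

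The only delicate point is the evenness of $p(2,0)$ in the second step. Without it, the argument $2+p(2,0)$ could be odd and we would land in the other branch of the formula. Lemma \ref{lem4.7} supplies exactly what is needed, and the rest is direct substitution.
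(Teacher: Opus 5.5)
Your proof is correct and follows the paper's argument exactly. You first substitute $(a,s)=(0,1)$, $t=0$ into ($\star$) to get $p(b,1)=p(0,1)-p(b,0)$ and combine this with Lemma~\ref{lem4.6}. You then substitute $(a,s)=(2,0)$, $(b,t)=(0,1)$ and use the evenness from Lemma~\ref{lem4.7} to evaluate $p(2+p(2,0),1)$, which yields $p(2,0)\bigl(p(2,0)+4\bigr)=0$.
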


\begin{proof}
  Set $(a,s)=(0,1)$ and $t=0$ for ($\star$).
  We have $p(0,1)-p(b,0)=p(b,1)$.
  With Lemma \ref{lem4.6}, we can conclude
  $p(2\ell,1)= p(0,1)-\ell p(2,0)$ and
  $p(2\ell+1,1)= p(0,1)-\ell p(2,0)-p(1,0)$.
  
  Set $(a,s)=(2,0)$ and $(b,t)=(0,1)$ for ($\star$).
  By Lemma \ref{lem4.7},
  we have $p(2,0)+p(0,1)=p(2+p(2,0),1)=p(0,1)-(2+p(2,0))p(2,0)/2$,
  and this shows $p(2,0)=0$ or $-4$.
\end{proof}

\begin{prop}\label{prop11}
  If $q(1,0)=q(0,1)=1$, then one of the following statements holds:
  \begin{enumerate}[{\rm (1)}]
    \item  $p(2\ell,0) = 0$,
    $p(2\ell+1,0) = 2k$,
    $p(2\ell,1) = 2k$,
    $p(2\ell+1,1) = 0$, and
    $q(a,s)= a+s$ for some $k\in \Z$,
    \item
    $p(2\ell,0) = -4\ell$,
    $p(2\ell+1,0) = -4\ell+2k$,
    $p(2\ell,1) = 4\ell$,
    $p(2\ell+1,1) = 4\ell-2k$, and
    $q(a,s)= a+s$ for some $k\in \Z$.
  \end{enumerate}
\end{prop}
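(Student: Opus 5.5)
The plan is to pin down the single remaining unknown $p(0,1)$ in terms of $p(1,0)$ and $p(2,0)$, using one well-chosen instance of ($\star$). Everything else then follows from Lemmas \ref{lem4.6}, \ref{lem4.7} and \ref{lem4.10}.

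First I record what is already known. Since $q(1,0)=q(0,1)=1$, Lemma \ref{lem4.6}(2) gives $q(a,s)=a+s$ for all $(a,s)$. Lemma \ref{lem4.7} says $p(1,0)$ is even, so I write $p(1,0)=2k$ with $k\in\Z$. Lemma \ref{lem4.6} then gives
\[
p(2\ell,0)=\ell p(2,0),\qquad p(2\ell+1,0)=\ell p(2,0)+2k .
\]
Lemma \ref{lem4.10} gives
\[
p(2\ell,1)=p(0,1)-\ell p(2,0),\qquad p(2\ell+1,1)=p(0,1)-\ell p(2,0)-2k,
\]
together with $p(2,0)\in\{0,-4\}$. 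So the only missing datum is $p(0,1)$.

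To find $p(0,1)$, I apply ($\star$) with $(a,s)=(1,0)$ and $(b,t)=(0,1)$. Since $q(1,0)=1$, the left side is $p(1,0)-p(0,1)=2k-p(0,1)$. The argument on the right side is $\bigl(1-0+p(1,0),1\bigr)=(2k+1,1)$, an element with odd first coordinate and $\ell=k$. By Lemma \ref{lem4.10}, the right side is therefore $p(0,1)-kp(2,0)-2k$. This yields the single linear equation
\[
2p(0,1)=4k+k\,p(2,0).
\]

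Now I split according to Lemma \ref{lem4.10}.

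If $p(2,0)=0$, the equation gives $p(0,1)=2k$. Substituting, $p(2\ell,0)=0$, $p(2\ell+1,0)=2k$, $p(2\ell,1)=2k$ and $p(2\ell+1,1)=0$. This is case (1).

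If $p(2,0)=-4$, the equation gives $p(0,1)=0$. Substituting, $p(2\ell,0)=-4\ell$, $p(2\ell+1,0)=-4\ell+2k$, $p(2\ell,1)=4\ell$ and $p(2\ell+1,1)=4\ell-2k$. This is case (2).

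The statement as given only asserts necessity, so this completes it. Unlike Propositions \ref{prop10} and the earlier ones, it does not claim that both families actually satisfy ($\star$) and ($\star\star$). If that converse is wanted, I would check ($\star\star$) directly, exactly as in Proposition \ref{prop10}. For ($\star$) I would run the same parity case split on $a$, $b$, $s$, $t$ used in the appendices. That bookkeeping is the only laborious part; the derivation above is short.

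The one point needing care is the parity and sign in the argument $2k+1$. It is odd precisely because $p(1,0)$ is even by Lemma \ref{lem4.7}, and it must be matched with the correct formula from Lemma \ref{lem4.10}, namely the odd one with $\ell=k$.
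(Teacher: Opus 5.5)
Your proof is correct and follows the paper's approach: you use Lemmas \ref{lem4.6}, \ref{lem4.7}, \ref{lem4.10} and then a single instance of ($\star$) to pin down $p(0,1)$. The only difference is that you use $(a,s)=(1,0)$, $(b,t)=(0,1)$ in both subcases to get $2p(0,1)=4k+k\,p(2,0)$, whereas the paper switches to $(b,t)=(1,1)$ when $p(2,0)=-4$; the paper's proof also checks the converse (via a parity argument and Appendix C), which, as you note, the statement does not assert.
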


\begin{proof}
  Assume $p(2,0)=0$.
  Set $(a,s)=(1,0)$ and $(b,t)=(0,1)$ for ($\star$).
  We have $p(1,0)-p(0,1)=p(1+p(1,0),1)=p(0,1)-p(1,0)$.
  This shows that $p(1,0)=p(0,1)$.
  Since $p(1,0)$ is an even number, we set $p(1,0)=2k$.

  Remark that $p(a,s)=0$ if $a+s$ is even and $p(a,s)=2k$ if $a+s$ is odd.
  The left-hand side of ($\star$) is $0$ if $(a+s)+(b+t)$ is even and
  $1$ if $(a+s)+(b+t)$ is odd.
  Similarly, the same holds for the right-hand side.
  Similarly, the condition ($\star\star$) holds. 

  Assume $p(2,0)=-4$.
  Set $(a,s)=(1,0)$ and $(b,t)=(1,1)$ for ($\star$).
  We have $p(1,0)-p(1,1)=p(p(1,0),1)$ and this shows $p(0,1)=0$.
  Since $p(1,0)$ is even, we set $p(1,0)=2k$.

  For ($\star\star$),
  \begin{align*}
    \text{LHS} &= q(a,s)+q(b,t)=a+s+b+t,\\
    \text{RHS} &= q(a+(-1)^ab+(-1)^sp(a,s)t,s+t)=a+b+s+t,
  \end{align*}
  in $\Z_2$, and the equation holds.
  For the condition ($\star$), see Appendix C.
\end{proof}

\subsection{All $\gamma$-maps}
Combining Propositions \ref{prop00}, \ref{prop01},
\ref{prop10} and \ref{prop11},
we list all $\gamma$-maps on $D_\infty$.

\begin{thm}
  $\gamma$-maps on $D_\infty$ are
  (in the following, $\ell$ is always an integer) :
  \begin{enumerate}[{\rm (1)}]
    \item
    $\gamma(a,s)=[0,0]$,
    \item
    $\gamma(a,s)=[sk+(-1)^{s+1}2a,0]$ for some $k\in \Z$,
    \item
    $\gamma(a,s)=[(-1)^{s+1}2a,s]$,
    \item
    $\gamma(a,s)=[sk,s]$ for some $k\in \Z$,
    \item 
    $\gamma(2\ell,s)=[0,0]$,
    $\gamma(2\ell+1,s)=[2k,1]$
    for some $k\in \Z$,
    \item
    $\gamma(2\ell,0)=[-4\ell,0]$,
    $\gamma(2\ell+1,0)=[-4\ell+2k,1]$,
    $\gamma(2\ell,1)=[4\ell-2k-2,0]$,
    $\gamma(2\ell-1,1)=[4\ell-2,1]$
    for some $k\in \Z$,
    \item
    $\gamma(2\ell,0)=[0,0]$,
    $\gamma(2\ell+1,0)=[2k,1]$,
    $\gamma(2\ell,1)=[2k,1]$,
    $\gamma(2\ell-1,1)=[0,0]$
    for some $k\in \Z$.
    \item
    $\gamma(2\ell,0)=[-4\ell,0]$,
    $\gamma(2\ell+1,0)=[-4\ell+2k,1]$,
    $\gamma(2\ell,1)=[4\ell,1]$,
    $\gamma(2\ell+1,1)=[4\ell-2k,0]$
    for some $k\in \Z$.
  \end{enumerate}
\end{thm}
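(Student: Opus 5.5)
This theorem is a consolidation of Propositions \ref{prop00}, \ref{prop01}, \ref{prop10} and \ref{prop11}. The plan is a case split on the pair $(q(1,0),q(0,1))\in\Z_2\times\Z_2$, which exhausts all possibilities. For each of the four cases, the corresponding proposition gives two things. First, any $\gamma$-map in that case has one of two explicit forms. Second, each of these forms satisfies ($\star$) and ($\star\star$), and hence, by Proposition \ref{prop_gamma}, really is a $\gamma$-map. So the list is both necessary and sufficient once the formulas are transcribed into the notation $\gamma(a,s)=[p(a,s),q(a,s)]$.

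The transcription goes as follows.
\begin{itemize}
  \item Case $q(1,0)=q(0,1)=0$: Proposition \ref{prop00} gives items (1) and (2) directly, with $q\equiv 0$.
  \item Case $q(1,0)=0$, $q(0,1)=1$: Proposition \ref{prop01} gives $q(a,s)=s$, with $p=(-1)^{s+1}2a$ or $p=sk$. These are items (3) and (4).
  \item Case $q(1,0)=1$, $q(0,1)=0$: Proposition \ref{prop10} gives $q(a,s)=a$ mod $2$. In its subcase (1), $p$ vanishes on even $a$ and equals $2k$ on odd $a$, regardless of $s$. This is item (5). In its subcase (2), I will read off the four parity classes $(2\ell,0)$, $(2\ell+1,0)$, $(2\ell,1)$, $(2\ell-1,1)$, attaching $q=a$ mod $2$ to each. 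This gives item (6).
  \item Case $q(1,0)=q(0,1)=1$: Proposition \ref{prop11} gives $q(a,s)=a+s$ mod $2$. Attaching this $q$ to the listed values of $p$ gives items (7) and (8).
\end{itemize}
In item (7) I will check that the parametrization of the odd second-coordinate elements, written $(2\ell-1,1)$ rather than $(2\ell+1,1)$, is harmless, since both range over all odd integers as $\ell$ ranges over $\Z$.

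For sufficiency I rely on the verifications already recorded in those propositions and in Appendices A--C. The one gap is that Proposition \ref{prop11} does not explicitly state that both of its subcases satisfy the conditions. Its proof does check ($\star$) and ($\star\star$) in each subcase (directly in subcase (1), via Appendix C in subcase (2)), so I will cite that.

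The only real obstacle is bookkeeping: making sure that $q$ is read modulo $2$ correctly for each parity class of $a$, and that the index shifts in $\ell$ match the source propositions. I would double-check each item by evaluating it at $(0,0)$, $(1,0)$ and $(0,1)$, where it must return $[0,0]$, $[p(1,0),q(1,0)]$ and $[p(0,1),q(0,1)]$ respectively, consistently with Lemmas \ref{lem4.3}, \ref{lem4.8} and \ref{lem4.10}.
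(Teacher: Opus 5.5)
Your proposal is correct and follows the paper's argument: the paper obtains this theorem by combining Propositions \ref{prop00}, \ref{prop01}, \ref{prop10} and \ref{prop11} over the four values of $(q(1,0),q(0,1))$, and your transcriptions into items (1)--(8) match, including the harmless $2\ell\pm1$ reindexing in items (5) and (7). You rightly note that Proposition \ref{prop11} verifies ($\star$) and ($\star\star$) only inside its proof rather than in its statement, and this does not affect the argument.
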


For each case, we write $\gamma_{(i)}$ for the $\gamma$-map.
We write  $\gamma_{(i,k)}$ ($i\in\{2, 4, 5, 6, 7, 8\}$)
to specify the parameter $k$.

\section{Equivalence of $\gamma$-maps}
We determine the equivalences of $\gamma$-maps.

\begin{lemm}\label{lemCindy}
  $\gamma$-maps are characterized by the values at
  $(1,0)$, $(0,1)$, and $(2,0)$.
\end{lemm}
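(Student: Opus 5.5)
The lemma asks: if two $\gamma$-maps $\gamma,\gamma'$ on $D_\infty$ agree at $(1,0)$, $(0,1)$ and $(2,0)$, then $\gamma=\gamma'$. The plan is not to argue abstractly but to read this off from the derivations in Section 4. Those derivations reconstruct every $\gamma$-map from exactly these three values.

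First, the values $q(1,0)$ and $q(0,1)$ determine which of Propositions \ref{prop00}, \ref{prop01}, \ref{prop10}, \ref{prop11} applies. So $\gamma$ and $\gamma'$ fall into the same one of the four families.

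Within each family I check that $p(1,0)$, $p(0,1)$, $p(2,0)$ pin down the map.
\begin{itemize}
\item In the case $q(1,0)=0$, Lemma \ref{lem4.2} gives $p(b,0)=bp(1,0)$, $p(b,1)=p(0,1)-bp(1,0)$, $q(b,0)=0$ and $q(b,1)=q(0,1)$. Hence $\gamma$ is determined by the values at $(1,0)$ and $(0,1)$ alone.
\item In the case $q(1,0)=1$, Lemma \ref{lem4.6} expresses $p(a,0)$ through $p(2,0)$ and $p(1,0)$, and gives $q(a,s)$ from $q(0,1)$.
\item For the second coordinate $s=1$, Lemma \ref{lem4.8} (when $q(0,1)=0$) or Lemma \ref{lem4.10} (when $q(0,1)=1$) expresses $p(a,1)$ through $p(0,1)$, $p(2,0)$ and $p(1,0)$.
\end{itemize}
In all cases $p$ and $q$ are therefore completely determined by $\gamma(1,0)$, $\gamma(0,1)$ and $\gamma(2,0)$.

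The same conclusion can be seen directly on the list in the preceding theorem. The label of the case is fixed by $(q(1,0),q(0,1))$ together with the value of $p(2,0)$. This separates, for instance, case (5) with $p(2,0)=0$ from case (6) with $p(2,0)=-4$, and likewise (7) from (8). In case (2) the parameter $k$ is read off as $p(0,1)$; in cases (5)--(8) it is read off as $k=p(1,0)/2$.

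The one point needing care is case (4) versus case (3) within $q(1,0)=0,\ q(0,1)=1$. Case (3) has $p(1,0)=-2$, while case (4) has $p(1,0)=0$, so they are distinguished by $p(1,0)$. Likewise case (1) coincides with $\gamma_{(4,0)}$-type data only if also $q(0,1)$ agrees, which it does not.

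The main obstacle is purely bookkeeping: making sure no two entries of the list share all three values. I would tabulate $(\gamma(1,0),\gamma(0,1),\gamma(2,0))$ for each family and observe that the triples are pairwise distinct and that $k$ is recoverable from them.
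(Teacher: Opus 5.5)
Your proposal is correct, but your primary argument takes a different route from the paper's. The paper proves the lemma only \emph{after} the complete list of $\gamma$-maps is established. It tabulates $(\gamma_{(i)}(1,0),\gamma_{(i)}(0,1),\gamma_{(i)}(2,0))$ for the eight families and observes that the label and the parameter $k$ can be read off from the triple. So the injectivity of $\gamma\mapsto(\gamma(1,0),\gamma(0,1),\gamma(2,0))$ rests on that list being complete.

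Your first argument uses only Lemmas \ref{lem4.2} and \ref{lem4.6} and the first halves of Lemmas \ref{lem4.8} and \ref{lem4.10}. Once $q(1,0)$ is fixed, these already give explicit formulas for every $p(a,s)$ and $q(a,s)$ in terms of $p(1,0)$, $p(0,1)$, $p(2,0)$ and $q(0,1)$. This proves the statement directly for an arbitrary $\gamma$-map. It needs none of Lemma \ref{lem4.3}, Lemma \ref{lem4.7}, the constraint $p(2,0)\in\{0,-4\}$, the appendix verifications, or the final list. It is essentially the observation credited to Cindy Tsang in the remark after the lemma.

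What the paper's table buys in addition is an explicit record of which triples occur and for which label. That record is exactly what the proof of Theorem \ref{cls_gamma} looks up to identify each $\gamma_{(i)}^{[b,t]}$ with some $\gamma_{(j,k')}$.

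Your second paragraph is essentially the paper's proof. In case (4) the parameter is also read off as $k=p(0,1)$. Your separate discussion of (3) versus (4) is redundant, since $p(2,0)$ already separates them.
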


\begin{proof}
  We make a list for the values of $\gamma_{(i)}$. 
  $$
    \begin{array}{c||c|c|c}
      & \gamma_{(i)}(1,0) & \gamma_{(i)}(0,1) &\gamma_{(i)}(2,0)\\
      \hline
      \hline
      (1) & [0,0] & [0,0] & [0,0] \\
      \hline
      (2,k) & [-2,0] & [k,0] & [-4,0]\\
      \hline
      (3) & [-2,0] & [0,1] & [-4,0]\\
      \hline
      (4,k) & [0,0] & [k,1] & [0,0]\\
      \hline
      (5,k) & [2k,1] & [0,0] & [0,0]\\
      \hline
      (6,k) & [2k,1] & [-2k-2,0] & [-4,0]\\
      \hline
      (7,k) & [2k,1] & [2k,1] & [0,0]\\
      \hline
      (8,k) & [2k,1] & [0,1] & [-4,0]\\
      \hline
    \end{array}
  $$
  This shows the lemma.
\end{proof}

\begin{rema}
  Cindy Tsang pointed out that Lemma \ref{lemCindy}
  holds without the classification of $\gamma$-maps.
\end{rema}

\begin{thm}\label{cls_gamma}
  Representatives of $\gamma$-maps on $D_\infty$ are
  $$\gamma_{(1)}, \gamma_{(2,0)}, \gamma_{(2,1)}, \gamma_{(3)},
  \gamma_{(4,0)}, \gamma_{(4,1)}, \gamma_{(5,0)}, \gamma_{(5,1)},
  \gamma_{(6,0)}, \gamma_{(6,1)}. $$
\end{thm}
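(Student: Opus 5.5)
The plan is to compute the action of $\Aut(D_\infty)$ on $\gamma$-maps explicitly, using Lemma \ref{lemCindy} to reduce everything to the three values at $(1,0)$, $(0,1)$ and $(2,0)$. The action sends $\gamma$-maps to $\gamma$-maps, and a $\gamma$-map is determined by these three values. So for $\varphi=[b,t]$ it suffices to compute
$$\gamma^\varphi(1,0),\quad \gamma^\varphi(0,1),\quad \gamma^\varphi(2,0),$$
locate the resulting triple in the table of Lemma \ref{lemCindy}, and read off which $\gamma_{(i,k')}$ is equivalent to $\gamma_{(i,k)}$.

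By Proposition \ref{prop2.2} we have $\varphi(1,0)=((-1)^t,0)$, $\varphi(0,1)=(b,1)$ and $\varphi(2,0)=(2(-1)^t,0)$. By Proposition \ref{prop2.2}(3), conjugation by $[b,t]$ sends $[a,s]$ to
$$[(-1)^ta+((-1)^{s+t}-(-1)^t)b,\ s].$$
In particular it preserves the second coordinate. Hence the $q$-values of $\gamma^\varphi$ at these points equal the $q$-values of $\gamma$ at $(\pm1,0)$, $(b,1)$, $(\pm 2,0)$.

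First step: apply this with the explicit formulas of the previous theorem, family by family.
\begin{itemize}
\item Families (1) and (3) are single maps, so nothing is needed.
\item Family (2): $\gamma_{(2,k)}(b,1)=[k+2b,0]$, conjugated with $s=0$, gives $\gamma^\varphi(0,1)=[(-1)^t(k+2b),0]$. The values at $(1,0)$ and $(2,0)$ stay $[-2,0]$ and $[-4,0]$, which should be checked with the formula. So $\gamma_{(2,k)}\sim\gamma_{(2,\pm k+2b)}$, and $k$ reduces to $0$ or $1$.
\item Family (4): similarly $\gamma^\varphi(0,1)=[(-1)^t(k-2b),1]$, so again only $k \bmod 2$ survives.
\item Families (5)--(8): $q(b,1)$ depends on the parity of $b$. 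Choosing $b$ odd should move $\gamma_{(7,k)}$ into family (5) and $\gamma_{(8,k)}$ into family (6). I will then track how $k$ changes (e.g. $2k\mapsto \pm 2k + \text{(multiple of }4)$ or a shift by $b$). The goal is to reduce $k$ to $\{0,1\}$ in families (5) and (6).
\end{itemize}
This yields that every $\gamma$-map is equivalent to one of the ten listed.

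Second step: show the ten are pairwise inequivalent by invariants, which I read off from the same conjugation formulas and the values $\varphi(x)$.
\begin{itemize}
\item $q(1,0)$ is invariant, since $q(\pm1,0)$ agree in all families.
\item In families (1)--(4), $q(0,1)$ is invariant, because there $q(b,1)$ does not depend on $b$.
\item Whether $p(2,0)=0$ is invariant, since $p(\pm 2,0)$ is either $0$ or $\mp4$ and conjugation only changes signs.
\item Whether $p(1,0)=0$ is invariant when $q(1,0)=0$.
\end{itemize}
These separate the six families of representatives. Within a family, the parity of $k$ (respectively of $k$ modulo the appropriate quantity) is invariant by the computations of the first step, since the formulas derived there give all possible new parameters. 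As a cross-check, the Corollary's multiplicative groups already separate $(2,0)$ from $(2,1)$, $(4,0)$ from $(4,1)$, $(5,0)$ from $(5,1)$ and $(6,0)$ from $(6,1)$.

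The main obstacle is the bookkeeping in families (5)--(8). There the values at $\varphi(1,0)=(\pm1,0)$ and $\varphi(2,0)=(\pm2,0)$ involve $\ell=-1$ branches such as $p(-1,0)=-4\cdot(-1)+\ldots$, and the merging of (7) into (5) and (8) into (6) requires matching all three coordinates simultaneously. I would first compute $\gamma_{(7,k)}^{[1,0]}$ and $\gamma_{(8,k)}^{[1,0]}$ directly. Then I would compute $\gamma_{(5,k)}^{[b,t]}$ and $\gamma_{(6,k)}^{[b,t]}$ for general $b,t$ to pin down exactly which $k$ are identified, verifying that $k$ is reduced modulo $2$ and up to sign.
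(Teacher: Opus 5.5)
Your plan is correct and is essentially the paper's proof. The paper tabulates $\gamma_{(i)}^{[b,t]}$ at $(1,0)$, $(0,1)$, $(2,0)$ for every family, identifies each result via Lemma \ref{lemCindy}, and the family (5)--(8) computations you deferred come out as you anticipate: $\gamma_{(5,k)}^{[b,t]}=\gamma_{(5,(-1)^t(k-b))}$ or $\gamma_{(7,(-1)^t(k-b))}$ according as $b$ is even or odd, and $\gamma_{(6,k)}^{[b,t]}$ lands in family (6) or (8) in the same way, with parameter $k-b$ or $-k+b-2$ according as $t=0$ or $t=1$. Your invariants ($q(1,0)$, then $q(0,1)$ when $q(1,0)=0$, and whether $p(2,0)=0$) simply repackage the paper's observation that each orbit stays inside one family or inside one of the merged pairs (5)$\cup$(7) and (6)$\cup$(8).
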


\begin{proof}
  We will show that
  \begin{itemize}
    \item $\{\gamma_{(1)}\}$ and $\{\gamma_{(3)}\}$
    are equivalence classes,
    \item for $i, j\in\{2,4,5,6\}$, $i\ne j$, $\gamma_{(i,k)}$
    and $\gamma_{(j,k')}$ are not equivalent,
    \item for $i\in \{2,4,5,6, 7, 8\}$ and $k, k'\in \Z$,
    $\gamma_{(i,k)}$ and $\gamma_{(i,k')}$ are equivalent
    if and only if $k\equiv k'\pmod{2}$, 
    \item $\gamma_{(5,0)}$ and $\gamma_{(7,1)}$ are equivalent, 
    \item $\gamma_{(5,1)}$ and $\gamma_{(7,0)}$ are equivalent,
    \item $\gamma_{(6,0)}$ and $\gamma_{(8,1)}$ are equivalent, and
    \item $\gamma_{(6,1)}$ and $\gamma_{(8,0)}$ are equivalent.
  \end{itemize}
  These prove the proposition.
  To show them, we will give a table of
  $\gamma_{(i)}^{[b,t]}(a,s)$ for $(a,s)=(1,0), (0,1), (2,0)$.

  {\small
  $$\begin{array}{c||c|c|c||c}
    & \gamma_{(i)}^{[b,t]}(1,0)
    & \gamma_{(i)}^{[b,t]}(0,1)
    & \gamma_{(i)}^{[b,t]}(2,0) & \gamma_{(j)}\\
    \hline
    \hline
    (1) & [0,0] & [0,0] & [0,0] & \gamma_{(1)}\\
    \hline
    (2,k) & [-2,0] & [(-1)^t(k+2b),0] & [-4,0]& \gamma_{(2,(-1)^t(k+2b))}\\
    \hline
    (3) & [-2,0] & [0,1] & [-4,0] & \gamma_{(3)}\\
    \hline
    (4,k) & [0,0] & [(-1)^t(k-2b),1] & [0,0] & \gamma_{(4,(-1)^t(k-2b))}\\
    \hline
    (5,k), \text{$b$:even} & [(-1)^t2(k-b),1] & [0,0] & [0,0] & \gamma_{(5,(-1)^t(k-b))}\\
    (5,k), \text{$b$:odd} & [(-1)^t2(k-b),1]  & [(-1)^t2(k-b),1] & [0,0]
                                & \gamma_{(7,(-1)^t(k-b))}\\
    \hline
    (6,k), \text{$b$:even, $t=0$} & [2(k-b),1] & [-2(k-b)-2,0] & [-4,0] &\gamma_{(6,k-b)}\\
    (6,k), \text{$b$:even, $t=1$} & [-2(k-b+2),1] & [2(k-b+2)-2,0] & [-4,0]&\gamma_{(6,-k+b-2)}\\
    (6,k), \text{$b$:odd, $t=0$} & [2(k-b),1] & [0,1] & [-4,0]&\gamma_{(8,k-b)}\\
    (6,k), \text{$b$:odd, $t=1$} & [-2(k-b+2),1] & [0,1] & [-4,0]&\gamma_{(8,-k+b-2)}\\
    \hline
    (7,k), \text{$b$:even}  & [(-1)^t2(k-b),1] & [(-1)^t2(k-b),1] & [0,0]
                                &\gamma_{(7,(-1)^t(k-b))}\\
    (7,k), \text{$b$:odd}  & [(-1)^t2(k-b),1] & [0,0] & [0,0]
                                &\gamma_{(5,(-1)^t(k-b))}\\
    \hline
    (8,k), \text{$b$:even, $t=0$}& [2(k-b),1] & [0,1] & [-4,0]&\gamma_{(8,k-b)}\\
    (8,k), \text{$b$:even, $t=1$}& [-2(k-b+2),1] & [0,1] & [-4,0]&\gamma_{(8,-k+b-2)}\\
    (8,k), \text{$b$:odd, $t=0$}& [2(k-b),1] & [-2(k-b)-2,0] & [-4,0]&\gamma_{(6,k-b)}\\
    (8,k), \text{$b$:odd, $t=1$}& [-2(k-b+2),1] & [2(k-b+2)-2,0] & [-4,0]&\gamma_{(6,-k+b-2)}\\
    \hline
  \end{array}$$}
  By the table, we can check all required conditions.
\end{proof}

Applying $a\ci b=a\gamma(a)(b)$ to Theorem \ref{cls_gamma}, we can show
our main result Theorem \ref{cls_slb}.

\section*{acknowledgment}
The authors would like to thank Cindy Tsang
and Yuta Kozakai for their valuable comments.
The first author was supported by JSPS KAKENHI Grant Number JP22K03266.

\bibliographystyle{amsplain}
\providecommand{\bysame}{\leavevmode\hbox to3em{\hrulefill}\thinspace}
\providecommand{\MR}{\relax\ifhmode\unskip\space\fi MR }
\providecommand{\MRhref}[2]{%
  \href{http://www.ams.org/mathscinet-getitem?mr=#1}{#2}
}
\providecommand{\href}[2]{#2}

\section*{Appendix A}
This is a suppliment for Proposition \ref{prop10} (1).
\medskip

{\tiny
\noindent\underline{$(a,s)=(2\ell,s)$, $(b,t)=(2m,t)$}
\begin{align*}
  \text{LHS} &= p(2\ell,s)+(-1)^{q(2\ell,s)}p(2m,t)=0,\\
  \text{RHS} &= p(2\ell+(-1)^{s+q(2\ell,s)}2m+(-1)^sp(2\ell,s)t,s+t)=0.
\end{align*}

\noindent\underline{$(a,s)=(2\ell,s)$, $(b,t)=(2m+1,t)$}
\begin{align*}
  \text{LHS} &= p(2\ell,s)+(-1)^{2\ell}p(2m+1,t)=2k,\\
  \text{RHS} &= p(2\ell+(-1)^{s+2\ell}(2m+1)+(-1)^sp(2\ell,s)t,s+t)=2k.
\end{align*}

\noindent\underline{$(a,s)=(2\ell+1,s)$, $(b,t)=(2m,t)$}
\begin{align*}
  \text{LHS} &= p(2\ell+1,s)+(-1)^{2\ell+1}p(2m,t)=2k\\
  \text{RHS} &= p(2\ell+(-1)^{s+2\ell+1}2m+(-1)^sp(2\ell+1,s)t,s+t)
                                   =2k.
\end{align*}

\noindent\underline{$(a,s)=(2\ell+1,s)$, $(b,t)=(2m+1,t)$}
\begin{align*}
  \text{LHS} &= p(2\ell+1,s)+(-1)^{2\ell+1}p(2m+1,t)
                                   =2k-2k=0,\\
  \text{RHS} &= p(2\ell+1+(-1)^{s+2\ell+1}(2m+1)+(-1)^sp(2\ell+1,s)t,s+t)
                                   =0.
\end{align*}
}

\section*{Appendix B}
This is a suppliment for Proposition \ref{prop10} (2).
\medskip

{\tiny
\noindent\underline{$(a,s)=(2\ell,0)$, $(b,t)=(2m,0)$}
\begin{align*}
  \text{LHS} &= p(2\ell,0)+(-1)^{q(2\ell,0)}p(2m,0)=-4\ell-4m,\\
  \text{RHS} &= p(2\ell+2m,0)=-4\ell-4m.
\end{align*}

\noindent\underline{$(a,s)=(2\ell,0)$, $(b,t)=(2m+1,0)$}
\begin{align*}
  \text{LHS} &= p(2\ell,0)+p(2m+1,0)=-4\ell-4m+2k,\\
  \text{RHS} &= p(2\ell+(2m+1),0)=-4(\ell+m)+2k.
\end{align*}

\noindent\underline{$(a,s)=(2\ell,0)$, $(b,t)=(2m,1)$}
\begin{align*}
  \text{LHS} &= p(2\ell,0)+p(2m+1)=-4\ell+4m+2, \\
  \text{RHS} &= p(2\ell+2m+1+p(2\ell,0),1)=p(-2\ell+2m+1,1)= -4\ell+4m+2.
\end{align*}

\noindent\underline{$(a,s)=(2\ell,0)$, $(b,t)=(2m+1,1)$}
\begin{align*}
  \text{LHS} &= p(2\ell,0)+p(2m+1,1)=-4\ell+4m+2,\\
  \text{RHS} &= p(2\ell+(2m+1)-4\ell,1)=p(-2\ell+2m+1,1)= -4\ell+4m+2.
\end{align*}

\noindent\underline{$(a,s)=(2\ell+1,0)$, $(b,t)=(2m,0)$}
\begin{align*}
  \text{LHS} &= p(2\ell+1,0)-p(2m,0)=-4\ell+2k+4m,\\
  \text{RHS} &= p(2\ell+1-2m,0)==-4(\ell-m)+2k.
\end{align*}

\noindent\underline{$(a,s)=(2\ell+1,0)$, $(b,t)=(2m+1,0)$}
\begin{align*}
  \text{LHS} &= p(2\ell+1,0)-p(2m+1,0)=-4\ell+2k-(-4m+2k)=-4\ell+4m,\\
  \text{RHS} &= p(2\ell+1-(2m+1),0)==-4\ell+4m.
\end{align*}

\noindent\underline{$(a,s)=(2\ell+1,0)$, $(b,t)=(2m,1)$}
\begin{align*}
  \text{LHS} &= p(2\ell+1,0)-p(2m,1)=-4\ell+2k-(4m-2k-2)=-4\ell-4m+4k+2,\\
  \text{RHS} &= p(2\ell+1-2m+p(2\ell+1,0),1)= p(2\ell-2m+1-4\ell+2k,1)
                                   =-4\ell-4m+4k+2.
\end{align*}

\noindent\underline{$(a,s)=(2\ell+1,0)$, $(b,t)=(2m+1,1)$}
\begin{align*}
  \text{LHS} &= p(2\ell+1,0)-p(2m+1,1)=-4\ell+2k-(4m+2)=-4\ell-4m+2k-2,\\
  \text{RHS} &= p(2\ell+1-(2m+1)+(-4\ell+2k),1)= p(-2\ell-2m+2k,1)
                               =-4\ell-4m+2k-2.
\end{align*}

\noindent\underline{$(a,s)=(2\ell,1)$, $(b,t)=(2m,0)$}
\begin{align*}
  \text{LHS} &= p(2\ell,1)+p(2m,0)=4\ell-2k-2-4m,\\
  \text{RHS} &= p(2\ell-2m,1)=4\ell-4m-2k-2.
\end{align*}

\noindent\underline{$(a,s)=(2\ell,1)$, $(b,t)=(2m+1,0)$}
\begin{align*}
  \text{LHS} &= p(2\ell,1)+p(2m+1,0)=4\ell-4m-2\\
  \text{RHS} &= p(2\ell-(2m+1),1)=4\ell-4m-2.
\end{align*}

\noindent\underline{$(a,s)=(2\ell,1)$, $(b,t)=(2m,1)$}
\begin{align*}
  \text{LHS} &= p(2\ell,1)+p(2m,1)=4\ell-2k-2+4m-2k-2=4\ell+4m-4k-4,\\
  \text{RHS} &= p(2\ell-2m-(4\ell-2k-2),0)=p(-2\ell-2m+2k+2,0)
                                   =4\ell+4m-4k-4.
\end{align*}

\noindent\underline{$(a,s)=(2\ell,1)$, $(b,t)=(2m+1,1)$}
\begin{align*}
  \text{LHS} &= p(2\ell,1)+p(2m+1,1)=4\ell-2k-2+4m+2,\\
  \text{RHS} &= p(2\ell-(2m+1)-(4\ell-2k-2),0)
  = p(-2\ell-2m+2k+1,0)=4\ell+4m-4k+2k.
\end{align*}

\noindent\underline{$(a,s)=(2\ell+1,1)$, $(b,t)=(2m,0)$}
\begin{align*}
  \text{LHS} &= p(2\ell+1,1)-p(2m,0)=4\ell+2+4m,\\
  \text{RHS} &= p(2\ell-1+2m,1)=4\ell+4m+2.
\end{align*}

\noindent\underline{$(a,s)=(2\ell+1,1)$, $(b,t)=(2m+1,0)$}
\begin{align*}
  \text{LHS} &= p(2\ell+1,1)-p(2m+1,0)
                                   =4\ell+2-(-4m+2k)=4\ell+4m-2k+2,\\
  \text{RHS} &= p(2\ell+1+(2m+1),1)
  =4\ell+4m+4-2k-2.
\end{align*}

\noindent\underline{$(a,s)=(2\ell+1,1)$, $(b,t)=(2m,1)$}
\begin{align*}
  \text{LHS} &= p(2\ell+1,1)-p(2m,1)=4\ell+2-(4m-2k-2)=4\ell-4m+2k+4,\\
  \text{RHS} &= p(2\ell+1+2m-(4\ell+2),0)
  = p(-2\ell+2m-1,0)=4\ell-4m+4+2k.
\end{align*}

\noindent\underline{$(a,s)=(2\ell+1,1)$, $(b,t)=(2m+1,1)$}
\begin{align*}
  \text{LHS} &= p(2\ell+1,1)-p(2m+1,1)=4\ell+2-(4m+2)=4\ell-4m,\\
  \text{RHS} &= p(2\ell+1+2m+1-(4\ell+2),0)
  = p(-2\ell+2m,0)=-4\ell+4m.
\end{align*}
}

\section*{Appendix C}
This is a suppliment for Proposition \ref{prop11} (2).
\medskip

{\tiny
\noindent\underline{$(a,s)=(2\ell,0)$, $(b,t)=(2m,0)$}
\begin{align*}
  \text{LHS} &= p(2\ell,0)+p(2m,0)=-4\ell-4m,\\
  \text{RHS} &= p(2\ell+2m,0)=-4\ell-4m.
\end{align*}

\noindent\underline{$(a,s)=(2\ell,0)$, $(b,t)=(2m+1,0)$}
\begin{align*}
  \text{LHS} &= p(2\ell,0)+p(2m+1,0)=-4\ell-4m+2k,\\
  \text{RHS} &= p(2\ell+2m+1,0)=-4\ell-4m+2k.
\end{align*}

\noindent\underline{$(a,s)=(2\ell,0)$, $(b,t)=(2m,1)$}
\begin{align*}
  \text{LHS} &= p(2\ell,0)+p(2m,1)=-4\ell+4m,\\
  \text{RHS} &= p(2\ell+2m+p(2\ell,0),1)=p(2\ell+2m-4\ell,1)=-4\ell+4m.
\end{align*}

\noindent\underline{$(a,s)=(2\ell,0)$, $(b,t)=(2m+1,1)$}
\begin{align*}
  \text{LHS} &= p(2\ell,0)+p(2m+1,1)=-4\ell+4m-2k,\\
  \text{RHS} &= p(-2\ell+2m+1,1)=-4\ell+4m-2k.
\end{align*}

\noindent\underline{$(a,s)=(2\ell+1,0)$, $(b,t)=(2m,0)$}
\begin{align*}
  \text{LHS} &= p(2\ell+1,0)-p(2m,0)=-4\ell+2k+4m,\\
  \text{RHS} &= p(2\ell+1-2m,0)=-4\ell+4m+2k.
\end{align*}

\noindent\underline{$(a,s)=(2\ell+1,0)$, $(b,t)=(2m+1,0)$}
\begin{align*}
  \text{LHS} &= p(2\ell+1,0)-p(2m+1,0)=-4\ell+2k+4m-2k=-4\ell+4m,\\
  \text{RHS} &= p(2\ell-(2m+1),0)=-4\ell+4m.
\end{align*}

\noindent\underline{$(a,s)=(2\ell+1,0)$, $(b,t)=(2m,1)$}
\begin{align*}
  \text{LHS} &= p(2\ell+1,0)-p(2m,1)=-4\ell+2k-4m,\\
  \text{RHS} &= p(2\ell+1-2m+p(2\ell+1,0),1)
  =p(-2\ell-2m+1+2k,1)=-4\ell-4m+2k.\\
\end{align*}

\noindent\underline{$(a,s)=(2\ell+1,0)$, $(b,t)=(2m+1,1)$}
\begin{align*}
  \text{LHS} &= p(2\ell+1,0)-p(2m-1,1)=-4\ell+2k-4m+2k=-4\ell-4m+4k,\\
  \text{RHS} &= p(2\ell+1-(2m+1)+p(2\ell+1,0),1)=-4\ell-4m+4k.
\end{align*}

\noindent\underline{$(a,s)=(2\ell,1)$, $(b,t)=(2m,0)$}
\begin{align*}
  \text{LHS} &= p(2\ell,1)-p(2m,0)=4\ell+4m,\\
  \text{RHS} &= p(2\ell+2m,1)=4\ell+4m.
\end{align*}

\noindent\underline{$(a,s)=(2\ell,1)$, $(b,t)=(2m+1,0)$}
\begin{align*}
  \text{LHS} &= p(2\ell,1)-p(2m+1,0)=4\ell-(-4m+p(1,0))=4\ell+4m-2k,\\
  \text{RHS} &= p(2\ell+2m+1,1)=4\ell+4m-2k. 
\end{align*}

\noindent\underline{$(a,s)=(2\ell,1)$, $(b,t)=(2m,1)$}
\begin{align*}
  \text{LHS} &= p(2\ell,1)-p(2m,1)=4\ell-4m,\\
  \text{RHS} &= p(2\ell+2m-p(2\ell,1),0)=p(-2\ell+2m)=4\ell-4m.
\end{align*}

\noindent\underline{$(a,s)=(2\ell,1)$, $(b,t)=(2m+1,1)$}
\begin{align*}
  \text{LHS} &= p(2\ell,1)-p(2m+1,1)=4\ell-4m+2k, \\
  \text{RHS} &= p(2\ell-(2m+1)-p(2\ell,1),0)
                                   =p(-2\ell+2m+1,0)=4\ell-4m+2k.
\end{align*}

\noindent\underline{$(a,s)=(2\ell+1,1)$, $(b,t)=(2m,0)$}
\begin{align*}
  \text{LHS} &= p(2\ell+1,1)+p(2m,0)=4\ell-2k-4m,\\
  \text{RHS} &= p(2\ell+1-2m,1)=4\ell-4m-2k.
\end{align*}

\noindent\underline{$(a,s)=(2\ell+1,1)$, $(b,t)=(2m+1,0)$}
\begin{align*}
  \text{LHS} &= p(2\ell+1,1)+p(2m+1,0)=4\ell-2k-4m+2k=4\ell-4m,\\
  \text{RHS} &= p(2\ell+1-(2m+1),1)=p(2\ell-2m,1)=4\ell-4m.
\end{align*}

\noindent\underline{$(a,s)=(2\ell+1,1)$, $(b,t)=(2m,1)$}
\begin{align*}
  \text{LHS} &= p(2\ell+1,1)+p(2m,1)=4\ell-2k+4m,\\
  \text{RHS} &= p(2\ell-2m-p(2\ell+1,1),0)
                                   =p(-2\ell-2m+1+2k,0)=4\ell+4m-2k.
\end{align*}

\noindent\underline{$(a,s)=(2\ell-1,1)$, $(b,t)=(2m+1,1)$}
\begin{align*}
  \text{LHS} &=  p(2\ell+1,1)+p(2m+1,1)=4\ell-2k+4m-2k=4\ell+4m-4k,\\
  \text{RHS} &= p(2\ell+1-(2m+1)-(4\ell-2k),0)
                                   =p(-2\ell-2m+2k,0)=4\ell-4m-4k.
\end{align*}
}

\end{document}